\newtheorem{lemma}{Lemma}
\newtheorem{teo}[lemma]{Theorem}
\newtheorem{prop}[lemma]{Proposition} 
\newtheorem{cor}[lemma]{Corollary}
\theoremstyle{definition}
\newtheorem{quest}[lemma]{Question}
\newtheorem{example}[lemma]{Example}
\theoremstyle{remark}
\newtheorem{rem}[lemma]{Remark} 
\newcommand{\matr} [4] {\big({\tiny\begin{array}{@{}c@{\ }c@{}} #1 & #2 \\ #3 & #4 \\ \end{array}} \big)}
\newcommand{\Iso}{{\rm Isom}}
\newcommand{\matX}{\ensuremath {\mathbb{X}}}
\newcommand{\matR} {\ensuremath {\mathbb{R}}}
\newcommand{\matZ} {\ensuremath {\mathbb{Z}}}
\newcommand{\matH} {\ensuremath {\mathbb{H}}}
\newcommand{\matS} {\ensuremath {\mathbb{S}}}
\newcommand{\calC} {\ensuremath {\mathcal{C}}}
\newcommand{\SO} {\ensuremath {{\rm SO}}}
\newcommand{\rk} {\ensuremath {{\rm rk}}}
\newcommand{\Vol} {\ensuremath {{\rm Vol}}}
\newcommand{\Isom} {\ensuremath {{\rm Isom}}}
\newcommand{\lk} {\ensuremath {{\rm link}}}
\newcommand{\nota} [1] {\caption{\footnotesize{#1}}}
\author{Ludovico Battista}
\address{}
\email{}
\author{Bruno Martelli}
\address{}
\email{}
\title{Hyperbolic 4-manifolds 
with perfect circle-valued Morse functions}
\begin{document}

\begin{abstract}
We exhibit some (compact and cusped) finite-volume hyperbolic four-manifolds $M$ with perfect circle-valued Morse functions, that is circle-valued Morse functions $f\colon M \to S^1$ with only index 2 critical points. We construct in particular one example where every generic circle-valued function is homotopic to a perfect one. 

An immediate consequence is the existence of infinitely many finite-volume (compact and cusped) hyperbolic 4-manifolds $M$ having a handle decomposition with bounded numbers of 1- and 3-handles, so with bounded Betti numbers $b_1(M), b_3(M)$ and rank $\rk(\pi_1(M))$.
\end{abstract}

\maketitle

\section*{Introduction} \label{introduction:section}
One of the most intriguing phenomena in 3-dimensional topology is the existence of many finite-volume hyperbolic 3-manifolds $M$ that fiber over $S^1$. On such manifolds, the surface fiber $\Sigma$ generates a normal subgroup $\pi_1(\Sigma) \triangleleft \pi_1(M)$ and determines a geometrically infinite covering $\widetilde M \to M$ diffeomorphic to $\Sigma \times \matR$. 
Every finite-volume hyperbolic 3-manifold is finitely covered by a 3-manifold that fibers over $S^1$ by a celebrated theorem of Agol and Wise \cite{A, W}. All the hyperbolic manifolds and orbifolds in this paper are tacitly assumed to be complete.

We could ask whether such fibrations occur also in higher dimension $n\geq 4$. The answer is certainly negative in even dimensions, for a simple reason: by the generalised Gauss -- Bonnet formula, the Euler characteristic of an even-dimensional finite-volume hyperbolic manifold does not vanish, while that of a fibration does.

The following seems a more sensible question to ask. 

\begin{quest}
Are there finite-volume hyperbolic $n$-manifolds with perfect circle-valued Morse functions in all dimensions $n$?
\end{quest}

We explain the terminology. A \emph{circle-valued Morse function} on a compact manifold $M$, possibly with boundary, is a smooth map $f \colon M\to S^1$ such that $f|_{\partial M}$ has no critical points and $f$
has finitely many critical points, all of non degenerate type. This implies in particular that $f|_{\partial M}$ is a fibration. 

Every finite-volume hyperbolic $n$-manifold $M$ is either closed or the interior of a compact manifold with boundary $M^*$. In the latter case, a circle-valued Morse function on $M$ is by definition the restriction of one on $M^*$. 

As it is customary in Morse theory, a circle-valued Morse function $f$ on a compact manifold $M$ yields a \emph{circular handle decomposition} for $M$, obtained by starting from any regular fiber, thickening it, adding a $i$-handle corresponding to each index-$i$ critical point, and then finally gluing the left and right vertical boundaries of the resulting manifold via some diffeomorphism.

A circle-valued Morse function $f\colon M \to S^1$ is \emph{perfect} if it has exactly $|\chi(M)|$ critical points. In general we have
$$\chi(M) = \sum (-1)^ic_i$$ 
where $c_i$ is the number of critical points of index $i$. Therefore $f$ is perfect if and only if it has the minimum possible number of critical points allowed by $\chi(M)$, and this holds precisely when all the indices of the critical points have the same parity.

In odd dimensions we have $\chi(M)=0$ and hence a perfect circle-valued Morse function is just a fibration. In dimension 2 it is a simple exercise to prove that every closed orientable surface has a perfect circle-valued Morse function. 

In dimension 4 a finite-volume hyperbolic four-manifold $M$ has $\chi(M)>0$ and a circle-valued Morse function on $M$ is perfect if and only if the critical points have index 0, 2, or 4. Since $M$ is connected and not simply connected, we deduce easily (by looking at the corresponding circular handle decomposition) that in fact only the index 2 is allowed.
Our main contribution is to provide some first examples, both in the cusped and in the compact setting.

\begin{teo} \label{main:teo}
There are some finite-volume (compact and cusped) hyperbolic 4-manifolds with perfect circle-valued Morse functions.
\end{teo}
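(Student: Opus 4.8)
The plan is to construct explicit hyperbolic 4-manifolds as covers of well-understood arithmetic examples and then build the circle-valued Morse function by an algebraic/combinatorial method inspired by the Bestvina--Brady and Jankiewicz--Norin--Wise philosophy adapted to the hyperbolic setting. Concretely, I would start from the right-angled 120-cell (for the compact case) and the right-angled ideal 24-cell (for the cusped case), each of which tiles $\matH^4$ and carries a large Coxeter group of symmetries. The key object is a suitable manifold $M$ obtained by a coloring/gluing construction on finitely many copies of the polytope, together with a surjection $\pi_1(M)\to\matZ$ whose ``level sets'' in the universal cover behave nicely with respect to the cell structure. The circle-valued Morse function $f\colon M\to S^1$ is then the one induced by this surjection, and the goal is to arrange that its singularities are all of index $2$.

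First I would set up the combinatorial model: realize $M$ as a cube complex or as a union of polytopes dual to a state sum, equip the $1$-skeleton with a $\matZ$-valued ``height'' (a diagonal map in the cube-complex language, or an assignment of $\pm 1$ to edges that is coherent around $2$-faces), and let $f$ be the resulting map to $S^1$. The regular fiber $\Sigma$ is a hypersurface cut out transversally, and I would identify it as a totally geodesic (or close to geodesic) hypersurface, hence itself a nice $3$-manifold. The critical points of $f$ occur exactly at the vertices of $M$ where the ascending and descending links are not both connected and acyclic; the parity constraint from $\chi(M)>0$ already forces index $2$ once one rules out indices $0,1,3,4$. So the heart of the argument is a local computation of ascending/descending links at each vertex of the polytope tiling.

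The main obstacle, and where most of the work lies, is precisely this link analysis: I must choose the gluing pattern (equivalently, the coloring of the facets and the height function on edges) so that at every vertex both the ascending link and the descending link are simply connected and have the homology of a point in degrees $0,1$ but not in degree $2$ — i.e. they are homotopy equivalent to wedges of $2$-spheres, not contractible, giving precisely index-$2$ handles. Since the vertex link in a hyperbolic $4$-manifold is a $3$-sphere tiled by the vertex figures of the $120$-cell (resp.\ $24$-cell), this is a finite but delicate check: I would encode the height function on the $1$-skeleton of this $S^3$ and verify the ascending/descending subcomplexes are connected and simply connected. For the cusped case there is the extra point that the cusp cross-sections are flat $3$-manifolds and one must ensure $f$ restricts to a genuine fibration on each cusp; this amounts to checking the induced height function on the Euclidean $3$-torus (or its quotient) has no critical points, which is a linear-algebra condition on the holonomy.

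Finally, having produced one such $M$ in each setting, the theorem follows. If no single small cover works directly, I would pass to a finite cover dual to a suitable homomorphism $\pi_1\to(\matZ/2)^k$ to gain enough freedom in the coloring, and then apply the link criterion there; Agol--Wise-type virtual specialness guarantees such covers exist with the required separability properties. I expect the cusped case to come first (ideal right-angled polytopes give more flexible gluings and the Morse function can be taken affine on each facet), with the compact case following by the analogous but computationally heavier analysis on the $120$-cell.
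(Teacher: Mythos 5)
Your overall strategy matches the paper's: colour the facets of the right-angled ideal 24-cell (and compact 120-cell), build the dual cube complex, define a height/diagonal map from an edge-labelling, and apply a Bestvina--Brady link criterion to promote the resulting map to a perfect circle-valued Morse function. The cusp condition you sketch (ensuring $f$ is a fibration on each flat cusp section) is also the right consideration and is handled in the paper by requiring opposite facets at each ideal vertex to carry opposite signs.

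However, there is a genuine error in the single most important step. You ask that each ascending and descending link be simply connected with the homology of a point in degrees $0,1$ but nontrivial $H_2$, ``i.e.\ homotopy equivalent to wedges of $2$-spheres.'' This is precisely the wrong condition: in Bestvina--Brady Morse theory, crossing a vertex attaches a \emph{cone over the descending link} along that link, so a descending link that collapses to a wedge of $k$ $d$-spheres contributes $k$ handles of index $d+1$. A wedge of $2$-spheres gives index-$3$ handles. To get only index-$2$ critical points you need the ascending/descending links to collapse to \emph{connected $1$-dimensional} complexes (wedges of circles), or to points when there is no critical point. The paper's Theorem on ascending/descending links states exactly this, and all the explicit link computations (e.g.\ the Hopf-link configuration for the 24-cell, Figure~\ref{nastri:fig}) produce circles, not $2$-spheres. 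As written, your link criterion would never be satisfied by the paper's examples, and if it were satisfied it would yield a non-perfect Morse function.

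Two smaller inaccuracies: (i) the regular fiber of such a Morse function is emphatically \emph{not} totally geodesic or $\pi_1$-injective --- the paper stresses that the fibers are very pleated, the preimage components in $\matH^4$ have the full $S^3$ as limit set, and indeed $\pi_1$-injectivity fails because the index-$2$ handles kill elements of the fiber's fundamental group; a totally geodesic hypersurface would contradict the fact that the abelian cover has $b_2=\infty$. (ii) The remark that ``the parity constraint from $\chi(M)>0$ already forces index $2$'' conflates the definition of perfectness with the conclusion you need to prove: the link analysis must directly rule out indices $0,1,3,4$, it does not follow from parity alone.
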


\begin{table}
\begin{center}
\begin{tabular}{cccccccc}
Name & cusps & Euler & $b_0$ & $b_1$ & $b_2$ & $b_3$ & $b_4$ \\
\hline
$W$ & 5 & 2 & 1 & 5 & 10 & 4 & 0 \\
$X$ & 24 & 8 & 1 & 21 & 51 & 23 & 0 \\
$Y$ & 1 & 1/12 & & & & & \\
$Z$ & 0 & 272 & 1 & 115 & 500 & 115 & 1
\end{tabular}
\vspace{.2 cm}
\nota{The hyperbolic 4-manifolds $W, X, Z$ and 4-orbifold $Y$ that we consider here. Each has some perfect circle-valued Morse functions. For each manifold or orbifold we display the number of cusps, the Euler characteristic, and the Betti numbers over $\matR$ (only for manifolds).}
\label{WXYZ:table}
\end{center}
\end{table}

We construct in particular three examples: two cusped hyperbolic 4-manifolds $W, X$ and one closed hyperbolic 4-manifold $Z$, each equipped with some perfect circle-valued Morse function. We also build an additional hyperbolic 4-orbifold $Y$ that has a perfect circle-valued Morse function (in some natural sense) with particularly small fibers. Some basic information on the topology of these four objects is collected in Table \ref{WXYZ:table}. 

This produces an immediate corollary. If $M$ is a cusped hyperbolic manifold, a handle decomposition for $M$ is by definition one for the compactification $M^*$.

\begin{cor}
There are infinitely many finite-volume (compact and cusped) hyperbolic 4-manifolds $M$  with a handle decomposition with bounded numbers of 1- and 3-handles, hence with bounded Betti numbers $b_1(M)$ and $b_3(M)$ and rank of $\pi_1(M)$. 
\end{cor}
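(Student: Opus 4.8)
The plan is to fix one of the manifolds furnished by Theorem~\ref{main:teo} — say the closed $Z$ for the compact statement and the cusped $W$ (or $X$) for the cusped one — together with its perfect circle-valued Morse function $f\colon M\to S^1$, and to run over the family of cyclic covers $p_k\colon M_k\to M$ obtained by pulling back the connected $k$-fold covering $S^1\to S^1$ along $f$. Replacing $f$ by a suitable factorization if necessary, we may assume $f_*\colon\pi_1(M)\to\matZ$ is onto, so that every $M_k$ is connected; it is then a finite-volume hyperbolic $4$-manifold, compact (resp.\ cusped) precisely when $M$ is, and $\chi(M_k)=k\,\chi(M)$. Since $\chi>0$, the $M_k$ are pairwise non-homeomorphic, so this already produces infinitely many manifolds in each regime.

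Each $M_k$ comes with a natural map $f_k\colon M_k\to S^1$ lifting $f$, which is again a circle-valued Morse function, now with $k\,|\chi(M)|$ critical points, all still of index $2$; crucially, its regular fibre is diffeomorphic to a fixed regular fibre $F$ of $f$, independently of $k$. Indeed, letting $\bar M\to M$ be the infinite cyclic cover dual to $f$ with lifted Morse function $\bar f\colon\bar M\to\matR$: this cover factors through every $M_k$, and a regular level set $\bar f^{-1}(t)\cong F$ injects into $M_k$ because the deck transformations shift $\bar f$ by nonzero integers. So cutting $M_k^*$ along such a fibre yields a cobordism $C_k$ from $F$ to $F$ which — by the usual Morse theory, using that all critical points have index $2$ — is $F\times[0,1]$ with $k\,|\chi(M)|$ two-handles attached, and $M_k^*$ is recovered by gluing the two $F$-ends of $C_k$ back together.

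I would then convert this circular picture into an honest handle decomposition of $M_k^*$. Fix a handle decomposition of the compact $3$-manifold $F$ with $a_i$ handles of index $i$. Thickening it, $F\times[0,1]$ acquires $a_i$ handles of index $i$, so $C_k$ gets $a_i$ handles of index $i$ together with $k\,|\chi(M)|$ further $2$-handles; and gluing the two $F$-boundary copies of $C_k$ to each other contributes one $(i{+}1)$-handle for each $i$-handle of $F$ — the "suspended" handles, exactly as in passing from $F\times[0,1]$ to $F\times S^1$. Altogether $M_k^*$ carries a handle decomposition with $a_0$ zero-handles, $a_0+a_1$ one-handles, $a_1+a_2+k|\chi(M)|$ two-handles, $a_2+a_3$ three-handles and $a_3$ four-handles (consistently with $\chi$). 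The numbers of $1$- and $3$-handles depend only on $F$, hence are bounded uniformly in $k$, while only the $2$-handles grow. Since for any handle decomposition $b_i\le\#\{i\text{-handles}\}$ and, after cancelling down to a single $0$-handle, $\rk\pi_1\le\#\{1\text{-handles}\}$, the stated bounds on $b_1(M_k)$, $b_3(M_k)$ and $\rk\pi_1(M_k)$ follow at once.

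The step I expect to take the most care is the conversion in the third paragraph: turning a circular handle decomposition whose critical handles are \emph{all of index $2$} into an ordinary one whose numbers of $1$- and $3$-handles are governed by the fibre alone. The mechanism is that index $2$ is the middle dimension, so the critical handles never contaminate indices $1$ and $3$, while everything outside them is manufactured from a handle decomposition of $F$, which is frozen along the tower $M_k\to M$. Making the gluing of the two $F$-ends precise — most cleanly, by realising it as the attachment of an external collar $F\times[0,1]$ decomposed into handles of the shifted indices — is routine and dimension-independent, but it is where the actual bookkeeping lives.
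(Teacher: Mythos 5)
Your proposal is correct and follows essentially the same route as the paper: pass to the cyclic covers $M_k \to M$ obtained by unwrapping the given perfect circle-valued Morse function $k$ times, observe that the regular fiber $F$ is unchanged, and convert the circular handle decomposition (thicken a handle decomposition of $F$, add the $2$-handles for the critical points, then close up by suspending each $i$-handle of $F$ to an $(i{+}1)$-handle) into an ordinary one in which only the $2$-handle count grows with $k$. You spell out the handle bookkeeping and the minor technical point that $f_*$ should be surjective (so the covers are connected) more explicitly than the paper does, but the idea and the structure of the argument are the same.
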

\begin{proof}
If $M$ is cusped, in what follows we work with the compactification $M^*$, still denoted by $M$ for simplicity.

If $M$ has a perfect circle-valued Morse function $f$, it inherits a circular handle decomposition, which further decomposes into a handle decomposition as follows: fix a handle decomposition of a regular fiber $N$ of $f$, thicken it, add a 2-handle for each singular point of $f$, and then add a $(i+1)$-handle for each $i$-handle of $N$ to close everything up.

For every $n \geq 2$ we can construct a cyclic covering $M_n \to M$ and a lift $f_n \colon M_n \to S^1$ by unwrapping $n$ times along $f$. The lifted $f_n$ is a circle-valued Morse function with the same regular fiber $N$ as above, so $M_n$ has a handle decomposition with a fixed number of $i$-handles for all $i\neq 2$.
\end{proof}

Recall that there are only finitely many hyperbolic 4-manifolds $M$ with bounded $b_2(M)$, since $\Vol(M) = \frac{4\pi^2}3 \chi(M)$ and $\chi(M) \leq 2 + b_2(M)$, and there are only finitely many finite-volume hyperbolic 4-manifolds $M$ with bounded volume \cite{Wang}.

The main purpose of this work is to show that perfect circle-valued Morse functions are quite frequent, at least among the very limited types of manifolds that we are able to investigate at present, that is those that decompose into right-angled polytopes. We construct the manifolds $W,X,Z$ by colouring some well-known right-angled polytopes, and then we build some circle-valued Morse functions by combining the techniques of Bestvina -- Brady \cite{BB} and Jankiewicz -- Norin -- Wise \cite{JNW}, plus some additional arguments that are presented in this paper.

For the manifold $W$ listed in Table \ref{WXYZ:table} we are able to determine precisely the integral cohomology classes that are represented by a perfect circle-valued Morse function.

\begin{teo} \label{W:teo}
The cusped hyperbolic 4-manifold $W$ has $H^1(W; \matR) = \matR^5$. The cohomology classes that are represented by some perfect circle-valued Morse function form the intersection
$$U \cap H^1(W; \matZ)$$
where $U= \{x_i \neq 0\}$ is the complement of the coordinate hyperplanes.
\end{teo}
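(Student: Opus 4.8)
The plan is to prove the two inclusions separately. For the "only if" direction — that a perfect circle-valued Morse function represents a class in $U$ — I would argue by contradiction: suppose $f\colon W \to S^1$ is perfect and represents a class $x = (x_1,\dots,x_5)$ with some coordinate $x_i = 0$. The coordinate $x_i$ should be interpreted as the "flux" of $f$ across the $i$-th cusp cross-section, or equivalently the degree of $f|_{\partial_i W^*}$ on the $3$-torus boundary component $\partial_i W^*$ corresponding to the $i$-th cusp. If $x_i = 0$, then $f$ restricted to that boundary torus is null-homotopic, so $f|_{\partial_i W^*}$ cannot be a fibration over $S^1$ — but the definition of circle-valued Morse function requires $f|_{\partial M^*}$ to have no critical points, hence to be a fibration on each component. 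This is the quick half, modulo pinning down exactly how the coordinates $x_i$ on $H^1(W;\matR)$ are set up relative to the five cusps (this is presumably done earlier in the paper when $W$ is constructed from a coloured right-angled polytope, and I would cite that normalization).

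For the "if" direction — that every integral class in $U$ is represented by a perfect circle-valued Morse function — I would use the Bestvina–Brady / Jankiewicz–Norin–Wise machinery advertised in the introduction. The manifold $W$ decomposes into right-angled polytopes and carries a dual square (or cubical) structure; a primitive integral class $x$ determines a "diagonal" map from the cube complex to $\matR$, and one wants to perturb the associated Morse function so that all ascending and descending links are contractible wedges (or at least have the homology of a wedge) of $2$-spheres — which forces all critical points to have index $2$. The combinatorial heart is a link condition: one must check that for each vertex of the relevant complex, the ascending and descending links determined by the sign vector of $x$ restricted to the edges are exactly the right kind of subcomplex. Because $W$ is small and explicit, this can be reduced to a finite check over the finitely many "local pictures" (one for each facet-colouring pattern of the polytope), and the key point is that the condition defining $U$ — no coordinate vanishing — is precisely what guarantees that at every cusp the relevant links don't degenerate and the local move stays in the Morse regime.

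The main obstacle I anticipate is the link/contractibility analysis in the "if" direction. It is not enough that the algebraic intersection numbers $x_i$ be nonzero; one genuinely has to verify that the ascending and descending links of the diagonal Morse function on the dual cube complex of $W$ are homotopy equivalent to wedges of $2$-spheres for every class in $U\cap H^1(W;\matZ)$, uniformly, and that no index-$0$, $1$, $3$, or $4$ critical points can survive after an allowable homotopy. This likely requires combining: (i) the $\matZ^5$-action or symmetry of the polytopal decomposition of $W$ to cut the check down to finitely many orbit representatives of vertices; (ii) an explicit description of each such link as a graph/complex depending only on the sign pattern $\mathrm{sgn}(x)\in\{+,-\}$ on incident edges; and (iii) the observation that boundary (cusp) vertices must be treated separately, where the fibration-on-the-boundary condition both constrains and is guaranteed by $x\in U$. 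Getting the boundary bookkeeping to mesh cleanly with the interior Bestvina–Brady argument — so that the resulting $f$ is simultaneously Morse in the interior and a fibration on $\partial W^*$ — is where I expect the real work to lie, and it is presumably why this sharp statement is available for $W$ but not for the other examples in Table \ref{WXYZ:table}.
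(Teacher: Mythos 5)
Your plan correctly reproduces the paper's strategy in outline, and your ``only if'' half is essentially identical to the paper's: a class with $x_i=0$ restricts to a null-homotopic map on the $i$-th cusp's boundary torus, so no circle-valued Morse function whatsoever can represent it, since the definition demands $f|_{\partial W^*}$ be a fibration. This is exactly the paper's closing argument.

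However, your ``if'' half contains a genuine error in the key technical condition. You ask that the ascending and descending links of the diagonal map be ``wedges of $2$-spheres'' in order to force index-$2$ critical points. That is off by one dimension, and in fact a wedge of $2$-spheres in the link would produce index-$3$ handles. The correct condition (this is the paper's Theorem~\ref{ad:teo}) is that each ascending and descending link should \emph{collapse to a connected polyhedron of dimension at most one} — i.e.\ to a point or to a finite connected graph, which up to homotopy is a wedge of \emph{circles}. Crossing a vertex then attaches a cone over a $1$-polyhedron, hence a union of $2$-handles; coning over a wedge of $2$-spheres would instead give $3$-cells. This is not a cosmetic issue: with your stated criterion, the finite combinatorial check you propose (over sign patterns $\mathrm{sgn}(x)$) would succeed for the wrong collection of states, and the conclusion about index-$2$ critical points would not follow. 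Also, no ``perturbation'' of $f_s$ is needed or performed: the paper directly verifies the link condition for the diagonal map itself and then invokes a smoothing step, not a perturbation of the map class.

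Beyond that, your proposal is vaguer than the paper at the crucial step: the actual content of the ``if'' direction is a finite check, recorded in Figure~\ref{P4_states:fig}, that for every vertex $v\in\matZ_2^5$ the ascending and descending links (determined purely by the sign pattern of the state at the $10$ facets of $P_4$) collapse either to a point or to a circle, together with the boundary criterion of Proposition~\ref{ideal:prop} — which here amounts to checking that at each ideal vertex of $P_4$ the two facets sharing a colour are assigned opposite signs, so that the boundary (cusp) vertices of $C^*$ can be ignored. You gesture at the right ingredients (sign patterns, symmetry reduction, boundary vertices), but the incorrect $2$-sphere criterion means the argument as written does not close.
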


The open set $U$ is dense, so a homologically generic map $W \to S^1$ is homotopic to a perfect Morse function. Moreover, the set $U$ is \emph{polytopal}, that is it is the cone over some open facets of a polytope. The polytope here is the hyperoctahedron, convex hull of the vertices $\pm e_i$, with $2^5 = 32$ facets. So $U$ has 32 connected components.

Of course we know from Thurston \cite{Th} that a similar theorem holds for any hyperbolic 3-manifold, with $U$ being the cone over some faces of the unit ball polytope of the Thurston norm. (Recall that a perfect circle-valued Morse function in dimension 3 is a fibration.) 

Here is another immediate corollary of Theorem \ref{main:teo}.

\begin{cor}
There are some geometrically infinite hyperbolic 4-manifolds $M$ that are diffeomorphic to $N \times [0,1]$ with infinitely many 2-handles attached on both sides, where $N$ is (the interior of) a 3-manifold that is either closed or bounded by tori. Here $\pi_1(M)$ is finitely generated but not finitely presented and $b_2(M) = \infty$.
\end{cor}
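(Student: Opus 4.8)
The plan is to realise the desired $M$ as the infinite cyclic cover of one of the finite-volume examples furnished by Theorem \ref{main:teo}. Let $M_0$ be one of the hyperbolic 4-manifolds $W$, $X$, $Z$, equipped with a perfect circle-valued Morse function $f_0\colon M_0\to S^1$; replacing $f_0$ by a homotopic map we may assume that $[f_0]\in H^1(M_0;\matZ)$ is primitive (possible for $W$ by Theorem \ref{W:teo}). Set $\varphi=(f_0)_*\colon\pi_1(M_0)\to\matZ$ and let $M\to M_0$ be the infinite cyclic cover determined by $\varphi$; it is a connected, non-compact (hence infinite-volume) hyperbolic 4-manifold, and it is aspherical. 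The map $f_0$ lifts to a proper Morse function $\widehat f\colon M\to\matR$ whose critical points all have index $2$, with infinitely many escaping to each of the two ends. Unrolling the circular handle decomposition of $M_0$ attached to $f_0$, one sees that $M$ is diffeomorphic to a single copy of $N\times[0,1]$ — with $N$ a regular fiber of $\widehat f$ — to which countably many $2$-handles are attached, in batches of $\chi(M_0)>0$, marching off to $+\infty$ on one side and to $-\infty$ on the other. The fiber $N$ is a $3$-manifold: it is closed if $M_0$ is closed, while if $M_0$ is cusped then $f_0$ restricts on $\partial M_0^*$ to a fibration over $S^1$, so the compactified fiber $N^*$ is a compact $3$-manifold with toral boundary and $N=\interior{N^*}$.

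I would first check that $\pi_1(M)$ is finitely generated. Write $M=M_+\cup_N M_-$ with $M_+=\widehat f^{-1}([0,\infty))$, $M_-=\widehat f^{-1}((-\infty,0])$ and $N=\widehat f^{-1}(0)$ for a regular value $0$. Each of $M_\pm$ is obtained from $N\times[0,1]$ by successively attaching $2$-handles, and attaching a $2$-handle to the boundary of a manifold induces a surjection on $\pi_1$; a direct-limit argument then gives that $\pi_1(N)\to\pi_1(M_\pm)$ is surjective, and van Kampen (using that $N$ is connected) shows that $\pi_1(N)\to\pi_1(M)$ is surjective. Since $N$, or its compactification $N^*$, is a compact $3$-manifold, $\pi_1(N)$ is finitely generated, hence so is $\pi_1(M)$; in particular $b_1(N)<\infty$. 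Connectedness of $N$ is in fact automatic here: $M$ is homotopy equivalent to $N$ with $2$-cells attached along loops, and attaching a $2$-cell along a loop cannot merge connected components, so $N$ has as many components as $M$, namely one.

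The heart of the matter is to show that $b_2(M)=\infty$; this is where it is used that the perfect Morse function has index-$2$ critical points, whose ascending and descending links are circles — connected (which gave finite generation of $\pi_1$) but not simply connected, so that one is in the Bestvina--Brady situation \cite{BB}. Concretely, $M$ is homotopy equivalent to a CW complex $K$ obtained from $N$ by attaching countably many $2$-cells $e_1,e_2,\dots$, one along the core of each $2$-handle of $\widehat f$; fix an enumeration and set $K_0=N$, $K_n=K_{n-1}\cup e_n$, so that $K=\bigcup_n K_n$. The long exact sequence of the pair $(K_n,K_{n-1})$, together with $H_3(K_n,K_{n-1};\matR)=0$ and $H_2(K_n,K_{n-1};\matR)\cong\matR$, shows that $H_2(K_{n-1};\matR)\hookrightarrow H_2(K_n;\matR)$ at every step, and that at step $n$ exactly one of two things happens: either the attaching loop of $e_n$ is non-zero in $H_1(K_{n-1};\matR)$, in which case $\dim H_1$ drops by one and $H_2$ is unchanged; or it is zero there, in which case $\dim H_2$ grows by one and $H_1$ is unchanged. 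As $\dim H_1(K_0;\matR)=b_1(N)$ is finite and the dimension of $H_1$ is non-increasing along the $K_n$, only finitely many steps can be of the first kind; hence $\dim H_2(K_n;\matR)\to\infty$, and since the transition maps on $H_2$ are injective we get $b_2(M)=\dim H_2(K;\matR)=\infty$.

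The rest is formal. Since $M$ is aspherical, $H_2(\pi_1(M);\matZ)\cong H_2(M;\matZ)$ has infinite rank, so $\pi_1(M)$ is not of type $FP_2$; in particular it is finitely generated but not finitely presented, and $b_2(M)=\infty$. Finally, the fundamental group of a geometrically finite hyperbolic $4$-manifold is finitely presented, since it acts properly discontinuously and cocompactly on the truncation of the convex core, a compact manifold with boundary; hence our $M$, whose fundamental group is not finitely presented, must be geometrically infinite. I expect the main obstacle to be the $b_2=\infty$ step: one must check that the handle decomposition of the non-compact manifold $M$ genuinely yields a CW model of the stated form and that no second homology is lost when passing to the direct limit — both of which reduce to the injectivity of the maps $H_2(K_{n-1};\matR)\to H_2(K_n;\matR)$ and to the finite-dimensionality of $H_1(N;\matR)$, which can "absorb" only finitely many of the attached $2$-cells.
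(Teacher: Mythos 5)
Your proof is correct and follows essentially the same route as the paper: pass to the infinite cyclic cover determined by the perfect circle-valued Morse function, observe that it is $N\times[0,1]$ with infinitely many $2$-handles attached, get finite generation of $\pi_1$ from the surjection $\pi_1(N)\twoheadrightarrow\pi_1(M)$, get $b_2=\infty$ from the infinitude of $2$-handles, and conclude non-finite-presentability via $H_2(M)=H_2(\pi_1(M))$ by asphericity. Your treatment of the $b_2=\infty$ step — tracking the long exact sequence of $(K_n,K_{n-1})$ and using finiteness of $b_1(N)$ to show only finitely many $2$-cells can lower $H_1$ rather than raise $H_2$ — spells out what the paper dispatches in one clause, and is a legitimate filling-in of that detail.
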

\begin{proof}
Pick a perfect circle-valued Morse function $f\colon M_0 \to S^1$ on a closed or cusped hyperbolic 4-manifold $M_0$. It lifts to a Morse function $\tilde f \colon \widetilde M_0 \to \matR$ on the abelian covering $\widetilde M_0$ determined by $\ker f$. The manifold $M = \widetilde M_0$ is as stated. 

We have $b_2(M) = \infty$ because there are infinitely many 2-handles. If $N$ is a regular fiber, any finite set of generators for $\pi_1(N)$ generates also $\pi_1(M)$, which is hence finitely generated. If $\pi_1(M)$ were also finitely presented, then $H_2(\pi_1(M))$ would be finitely generated, but the asphericity of $M$ implies that $H_2(M) = H_2(\pi_1(M))$, a contradiction.
\end{proof}

In all the cases we could investigate here, we found strong numerical evidence that such a geometrically infinite $M$ should be infinitesimally and hence locally rigid. In one case we have a rigorous computer-assisted proof, see Theorem \ref{rigid:teo}. 
The theoretical and computational aspects of these results will be exposed in a forthcoming paper \cite{Bat}.

Many results exposed in this paper were obtained by using both Regina \cite{BBP} and SnapPy \cite{Sna} inside a Sage environment. 

\subsection*{Summary of the paper}
The hyperbolic 4-manifolds $W$, $X$, and $Z$ are all constructed by colouring appropriately the facets of three well-known right-angled polytopes, that are respectively $P_4$, the ideal 24-cell, and the compact right-angled 120-cell. We recall in Section \ref{colours:subsection} how a coloured right-angled polytope $P$ defines a manifold $M$ tessellated into copies of $P$. In Section \ref{cube:subsection} we build a cube complex $C$ dual to this tessellation.

In Section \ref{states:subsection} we introduce the notion of \emph{real state}, that extends that of a state given in \cite{JNW}. A real state $s$ is simply a real number assigned to every facet of $P$. It produces a cohomology class $[s] \in H^1(M, \matR)$ and a piecewise-linear \emph{diagonal map} $f\colon \widetilde M \to \matR$ on the universal cover representing that class, as explained in Section \ref{diagonal:subsection}. We then use Bestvina -- Brady theory to analyse this map carefully in Section \ref{ad:subsection}. When $[s]$ is integral and certain conditions are fulfilled, the map $f$ descends to a map $f\colon M \to \matR/_\matZ = S^1$ that can be smoothened to a perfect circle-valued Morse function. The conditions are stated quite generally in Theorem \ref{ad:teo}.

In Section \ref{manifolds:section} we use this machinery to build $W, X, Y, Z$ and their perfect circle-valued Morse functions. In Section \ref{W:subsection} we construct $W$, prove Theorem \ref{W:teo}, and analyse the \emph{singular fibers} of $f$ in one particular case. In Section \ref{X:subsection} we construct $X$ and analyse the singular fibers of 63 distinct maps $f$ that arise naturally from the combinatorics. We get 63 distinct hyperbolic 3-manifolds with as much as 28 cusps each, all distinguished by their hyperbolic volume, listed in Tables \ref{fibers:table} and \ref{fibers2:table}. 
The infinitesimal rigidity of all the abelian covers considered is verified numerically in all these cases. 

Among these 63 maps $f$, one is particularly interesting due to its many symmetries. In Section \ref{Y:subsection} we use this map to build a very small orbifold $Y$ with an appropriate kind of circle-valued Morse function, for which we can fully determine both the \emph{singular} and \emph{regular} fibers: they both belong to the first segment of the census of cusped hyperbolic 3-manifolds \cite{CHW}. Finally, we produce the compact example $Z$ in Section \ref{Z:subsection}.

We make some comments and raise some open questions in Section \ref{questions:section}.

\subsection*{Acknowledgements} We thank Marc Culler and Matthias Goerner for helpful discussions on SnapPy.

\section{The construction} \label{colourings:section}

Our aim is now to construct some compact and cusped hyperbolic 4-manifolds equipped with perfect circle-valued Morse functions. The manifolds are obtained by assembling some right-angled polytopes, and then applying some techniques of Bestvina -- Brady \cite{BB} and Jankiewicz -- Norin -- Wise \cite{JNW}, plus some additional arguments.

\subsection{Colours} \label{colours:subsection}
All the hyperbolic 4-manifolds we consider here are built by colouring the facets of a right-angled polytope. This is a simple and fruitful technique already considered in other contexts, see for instance \cite{IMM, KM}.

We briefly recall how it works. Let $P\subset \matX^n$ be a right-angled finite polytope in some space $\matX^n = \matH^n, \matR^n$ or $\matS^n$. A \emph{$c$-colouring} of $P$ is the assignment of a colour (taken from some finite set of $c$ elements) to each facet of $P$, such that incident facets have distinct colours. We typically use $\{1,\ldots, c\}$ as a palette of colours and always suppose that every colour is painted on at least one facet.

A colouring defines a manifold $M$, obtained by mirroring $P$ iteratively along its coloured facets (in any order). More precisely, consider $2^c$ disjoint copies of $P$ denoted as $P^v$ where $v\in \matZ_2^c$ varies. We identify each facet $F$ of $P^v$ via the identity map to the same facet of $P^{v+e_j}$ where $j$ is the colour of $F$. 

The result of these identifications is a $n$-manifold $M$ having the same geometry $\matX^n$ of $P$, tessellated into $2^c$ copies of $P$. 

From a more algebraic point of view, the manifold $M$ can be constructed as follows. Let $\Gamma< \Iso(\matX^n)$ be the Coxeter group generated by the reflections along the facets of $P$. We define a homomorphism $\Gamma \to \matZ_2^c$ by sending the reflection along a facet $F$ to $e_i$, where $i$ is the colour assigned to $F$. The kernel $\Gamma'$ of the homomorphism is torsion-free and we get $M=\matX^n/\Gamma'$. We can generalise naturally this algebraic construction by assigning arbitrary vectors in $\matZ_2^c$ to the facets, requiring that vectors attached to any set of facets with non-empty intersection are independent, but we will not need this generalisation here: see \cite{KSla} for an introduction.

If we permute the colours in the palette the resulting manifold $M$ is unaffected, so colourings are typically considered only up to such permutations: in other words, we should think of a colouring as a partition of the facets of $P$.

We are interested in colourings with a small number of colours and many symmetries, since they produce manifolds that are reasonably small and with many isometries. Here are some examples:

\begin{figure}
 \begin{center}
  \includegraphics[width = 3.2 cm]{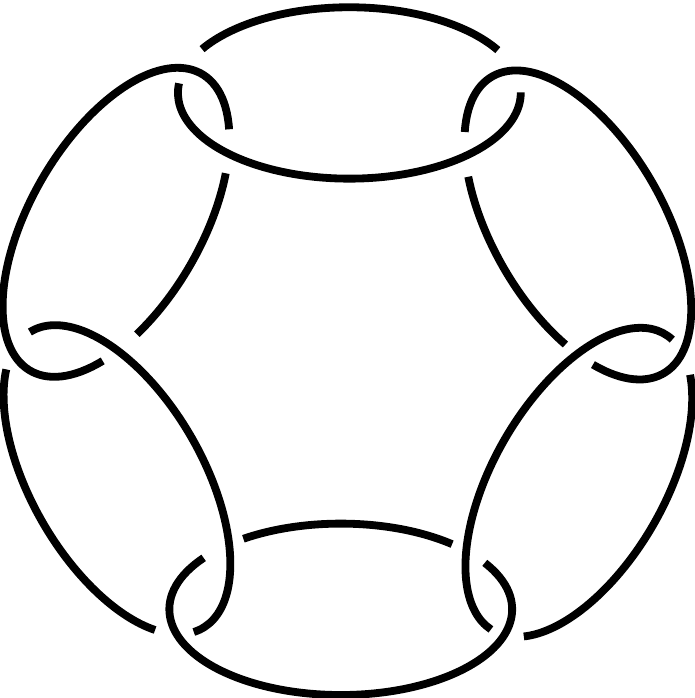}
 \end{center}
 \nota{The minimally twisted chain link with 6 components. }
 \label{chainlink:fig}
\end{figure}

\begin{itemize}
\item The $n$-cube has a unique $n$-colouring, where opposite facets are coloured with the same colour. This colouring produces a flat torus. More generally, one can prove that any colouring on the $n$-cube produces a flat torus.
\item The right-angled spherical $n$-simplex has a unique colouring, that produces the spherical manifold $S^n$.
\item The ideal octahedron in $\matH^3$ has a unique 2-colouring. The colouring produces a cusped hyperbolic 3-manifold which is the complement of the minimally twisted chain link with 6 components shown in Figure \ref{chainlink:fig}, see \cite{KM}.
\item The ideal 24-cell in $\matH^4$ has a unique 3-colouring. It produces a hyperbolic 4-manifold with 24 cusps with 3-torus sections, already considered in \cite{KM, MR}.
\item There is a sequence of hyperbolic right-angle polytopes $P_3,\ldots, P_8$ with $P_n \subset \matH^n$. Some natural symmetric colourings were constructed in \cite{IMM} for each $P_n$. These produce some hyperbolic $n$-manifolds.
\end{itemize}

The right-angled polytopes that we use here are $P_4$, the ideal 24-cell, and the right-angled 120-cell. We will describe their colourings later in detail.

\subsection{The dual cube complex} \label{cube:subsection} 
A coloured right-angled polytope $P$ produces a manifold $M$ tessellated into $2^c$ copies of $P$, denoted as $P^v$ with varying $v\in \matZ_2^c$. We now construct a cube complex $C$ dual to this tessellation. 

We work in the PL category. We consider $P$ as a compact Euclidean polytope (using the Klein model for $\matH^n$). The PL manifold $M$ is decomposed into $2^c$ identical copies $P^v$ of $P$, possibly with some ideal vertices: these are the pre-images of the ideal vertices of $P$, and they should be removed to get $M$. We call this an (ideal) polyhedral decomposition for $M$. A \emph{$k$-face} of this decomposition is by definition a $k$-face of some $P^v$. By convention each $P^v$ is a $n$-face, but the ideal vertices are not 0-faces (while the finite vertices are 0-faces).

We fix a barycentric subdivision of $P$, which lifts to a barycentric subdivision of the polyhedral decomposition of $M$. We define $C$ as the dual decomposition to the polyhedral decomposition of $M$ inside its barycentric subdivision. More precisely, we do the following. Every $k$-face $F$ in the polyhedral decomposition has a barycenter $v$, and the dual $(n-k)$-cell in $C$ is defined as the union of all the simplexes of the barycentric subdivision that intersect $F$ in $v$. 

\begin{prop}
The construction produces a cube complex $C\subset M$ onto which $M$ retracts. There is a natural 1-1 correspondence between the $k$-faces of the polyhedral decomposition of $M$ and the $(n-k)$-cubes of $C$, which reverses all containments.
If there are no ideal vertices we have $C=M$. In general $M\setminus C$ consists of cusps.
\end{prop}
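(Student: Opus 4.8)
The plan is to exhibit the dual cells explicitly as unions of barycentric simplices and verify combinatorially that each is a cube, that they fit together into a complex, and that $M$ deformation retracts onto it away from the cusps. First I would set up the standard bookkeeping: a simplex of the barycentric subdivision of the polyhedral decomposition of $M$ is a flag $F_0 \subsetneq F_1 \subsetneq \cdots \subsetneq F_n$ of faces of the decomposition (here $F_n = P^v$ for some $v$), with vertices the barycenters $\hat F_0, \ldots, \hat F_n$. For a fixed $k$-face $F$ with barycenter $\hat F$, the dual cell $D(F)$ is the union of all closed barycentric simplices whose flag contains $F$; equivalently $D(F)$ is the subcomplex spanned by barycenters $\hat G$ of faces $G \supseteq F$. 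So $D(F)$ is the order complex of the poset of faces containing $F$, realized inside $M$.

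Next I would identify $D(F)$ with an $(n-k)$-cube. The local picture is the key computation: since $P$ is a \emph{right-angled} polytope, the faces of the polyhedral decomposition containing a given $k$-face $F$ correspond, locally around $F$, to the faces of an $(n-k)$-dimensional right-angled corner (a product of intervals), because exactly $n-k$ mutually perpendicular facet-germs meet along $F$ and the mirroring construction glues copies of $P$ precisely across these facets. Concretely, the faces $G \supseteq F$ are indexed by subsets $S$ of the set of $n-k$ facets through $F$ together with the choice of which of the (at most two) copies of $P$ lies on each side; this is exactly the face poset of a cube of dimension $n-k$, and the order complex of the face poset of a cube is the barycentric subdivision of that cube. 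Hence $D(F)$ is PL-homeomorphic to $[0,1]^{n-k}$, with its barycentric subdivision, and the correspondence $F \leftrightarrow D(F)$ manifestly reverses inclusions. The cubes $D(F)$ and $D(F')$ meet in $D(F'')$ where $F''$ is the smallest face containing both $F$ and $F'$ (empty if none exists), so $C = \bigcup_F D(F)$ is a genuine cube complex.

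For the retraction, I would use the standard dual-skeleton deformation retraction: the barycentric subdivision of $M$ is a regular neighborhood structure in which $M$ collapses onto the dual of the polyhedral decomposition, pushing each open simplex $\hat F_0 \cdots \hat F_n$ (with $F_0$ a \emph{finite} vertex) toward the face $\hat F_1 \cdots \hat F_n$ of $C$. The only subtlety, and what I expect to be the main obstacle, is the bookkeeping around ideal vertices: these are not $0$-faces of the decomposition, so they contribute no dual $n$-cube, and the simplices of the barycentric subdivision incident to an ideal vertex are precisely the ones not absorbed by the retraction — they form the ends of $M$. I would argue that the link of an ideal vertex is an $(n-1)$-manifold (a flat torus, in our cases), that the union of simplices meeting a fixed ideal vertex but no finite vertex is a half-open collar of that link, and that $M$ minus the union of these collars is exactly $C$ and the retraction restricts to a product retraction on each collar onto its $C$-boundary. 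When there are no ideal vertices every flag starts at a finite $0$-face, no collars appear, the retraction sweeps all of $M$ onto $C$, and the top cells $D(\hat v) = D(P^v)$ are $0$-cubes (points) dual to the $n$-faces — wait, rather the retraction degenerates and one checks directly that $C = M$ since the dual cells already tile $M$. This completes the proof.
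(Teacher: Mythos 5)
Your argument takes essentially the same route as the paper: identify the dual cell $D(F)$ with the order complex of the poset of faces containing $F$, observe that right-angledness makes this poset isomorphic to the face poset of an $(n-k)$-cube (so the dual cell is a barycentrically subdivided cube), and identify $M\setminus C$ with the deleted open stars of the ideal vertices.

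Two small slips in the write-up, neither of which affects the validity of the argument since in each case you subsequently use the correct version. First, your opening formulation of $D(F)$ as ``the union of all closed barycentric simplices whose flag contains $F$'' is not equivalent to the second one; it describes the closed star of $\hat F$. A flag containing $F$ may also contain faces strictly below $F$, whose barycenters lie inside $F$, so the resulting union is an $n$-dimensional neighbourhood of $\hat F$ rather than an $(n-k)$-cell. The correct characterisation, which you then work with, is the full subcomplex on $\{\hat G : G\supseteq F\}$, equivalently the union of simplices whose flag has $F$ as its \emph{minimum} element. Second, in the retraction paragraph the parenthetical ``with $F_0$ a \emph{finite} vertex'' should read ``ideal vertex'': simplices whose flag starts at a finite $0$-face already lie in $C$, so there is nothing to push; the half-open simplices whose flag starts at an ideal vertex are exactly the ones that are swept onto their opposite face $\hat F_1\cdots \hat F_n\subset C$, as your later description of the collars around ideal vertices correctly says.
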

\begin{proof}
The barycentric subdivision of the polyhedral decomposition of $M$ can be constructed abstractly via the following standard two-steps procedure: (i) consider the poset of the faces of the polyhedral decomposition ordered by inclusion, and (ii) from the poset, define an abstract simplicial complex by considering every element as a vertex and every ascending chain as a simplex.

By construction the cell dual to a $k$-face $F$ is the simplicial subcomplex of the barycentric subdivision obtained from the chains in the poset formed by all the faces that contain $F$. Since $P$ is right-angled, we deduce easily that the poset is isomorphic to the poset of the faces of a $(n-k)$-cube. Therefore the cell is a barycentric subdivision of a $(n-k)$-cube. All these cubes intersect nicely to form a cube complex $C$ dual to the polyhedral decomposition.

If $P$ has some ideal vertices, the complement $M\setminus C$ consists of the open stars of the ideal vertices in the simplicial subdivision, without the ideal vertices: these are the cusps of $M$, and $M$ deformation retracts onto $C$.
\end{proof}

If $P \subset \matH^n$ has some ideal vertices, some more work is needed to enlarge the cube complex $C$ to a bigger cube complex $C^*$ whose interior is homeomorphic to $M$. This technical part is explained below and is necessary only in the cusped case, so it may be skipped at first reading.

\subsubsection{The cusped case} 
If $P\subset \matH^n$ has some ideal vertices, we consider it as a Euclidean polytope as above, and truncate all the ideal vertices to get a new Euclidean polytope $P^*$. The counterimage of $P^*$ in $M$ is a compact PL submanifold $M^*$ decomposed into copies of $P^*$, whose interior is homeomorphic to $M$. We get a polyhedral decomposition of $M^*$ into copies of $P^*$.

The truncation of every ideal vertex of $P$ produces a small $(n-1)$-cubic facet of $P^*$. In particular the decomposition of $M^*$ into copies of $P^*$ induces a decomposition of $\partial M^*$ into $(n-1)$-cubes.

As above, a \emph{$k$-face} of the decomposition of $M^*$ is a $k$-face of some copy of $P^*$. There are two types of $k$-faces: the \emph{boundary} ones, that are contained in $\partial M^*$, and the \emph{interior} ones, that are not. 

We fix a barycentric subdivision of $P^*$, which lifts to a barycentric subdivision of the polyhedral decomposition of $M^*$. We define $C^*$ to be the dual decomposition (defined as above) of the polyhedral decomposition of $M^*$ in this barycentric subdivision.

\begin{prop}
The construction produces a cube complex $C^*$ with $C^*=M^*$. 
There is a natural 1-1 correspondence between the $k$-faces of the polyhedral decomposition of $M^*$ and the $(n-k)$-cubes of $C^*$ that are not contained in $\partial M^*$, which reverses all containments.
The original $C$ is naturally a subcomplex of $C^*$.
\end{prop}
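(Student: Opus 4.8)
The plan is to run the very same argument used to prove the preceding proposition, but with $M^*$ and $P^*$ in place of $M$ and $P$, and then to deal separately with the faces lying on $\partial M^*$. First I would describe the barycentric subdivision of the polyhedral decomposition of $M^*$ abstractly: it is the order complex of the poset of all faces (interior and boundary) ordered by inclusion, and the cell $D(F)$ dual to a $k$-face $F$ is the subcomplex spanned by the ascending chains contained in the subposet of faces \emph{containing} $F$. Every simplex of the barycentric subdivision lies in exactly one such $D(F)$, namely for $F$ the minimum of its chain, so the cells $D(F)$ already cover all of $M^*$; it then remains to recognise each relevant cell as a cube, to identify the containments, and to locate $C$ inside it.

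For an interior $k$-face $F$ this is immediate. Write $F=\bar F^{\,*}$ for the corresponding $k$-face $\bar F$ of the polyhedral decomposition of $M$, which is not an ideal vertex. An interior face is contained in no truncation facet, so the poset of faces of $M^*$ containing $F$ equals, with all containments, the poset of faces of $M$ containing $\bar F$; by right-angledness of $P$ the preceding proposition identifies this with the face poset of an $(n-k)$-cube. Hence $D(F)$ is a subdivided $(n-k)$-cube, it is not contained in $\partial M^*$ (it contains the interior point $v_F$), and these cells are precisely the cubes of $C$. Since any face of such a $D(F)$ is again dual to a face containing $\bar F^{\,*}$, hence interior, hence in $C$, this already yields that $C$ is a subcomplex of $C^*$, as well as the part of the asserted bijection, with reversal of containments, between interior $k$-faces and $(n-k)$-cubes of $C^*$ off $\partial M^*$, inherited verbatim from the preceding proposition. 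I would also record here that $P^*$ is a \emph{simple} polytope: a new vertex lies on its truncation facet together with the $n-1$ facets of $P$ through the corresponding ideal vertex that meet it, i.e. on $n$ facets in all.

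For a boundary $k$-face $F$ the poset of faces containing $F$ is no longer a single cube poset, and this is the step requiring real work. The key structural input is that the link $L_p$ of an ideal vertex $p$ of $P$ is a compact right-angled Euclidean polytope, hence combinatorially an $(n-1)$-cube; the truncation facet replacing $p$ in each $P^{*\,v}$ is a copy of $L_p$, and these assemble into a cube-complex structure on the closed $(n-1)$-manifold $\partial M^*$, namely the decomposition into $(n-1)$-cubes already recorded in the statement. I would then show that over each cusp $M^*$ is, as a cube complex, the product of the dual cube complex of the cusp cross-section with $[0,1]$, glued to $C$ along the frontier of $C$ facing that cusp, which is exactly this same dual cube complex. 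Granting this, a boundary $k$-face $F$ is a $k$-cube of $\partial M^*$, its dual within the $(n-1)$-manifold $\partial M^*$ is an $(n-1-k)$-cube $\tau$, and the corresponding $(n-k)$-cube of $C^*$ not contained in $\partial M^*$ is $\tau\times[0,1]$. Together with the interior case this exhibits $C^*=M^*$ as a genuine cube complex and extends the containment-reversing bijection; the only remaining check is for a mixed containment $F\subset G$ with $F$ boundary and $G$ interior, which happens precisely when $\bar G$ has an ideal end, and there one verifies that the interior cube $D(G)$ is a face of $\tau_F\times[0,1]$ using that $D(G)$ meets the cusp frontier of $C$ in the $\partial M^*$-dual cell of the corresponding face.

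The main obstacle is exactly this last step. Unlike for interior faces, the combinatorics at a boundary face cannot be read off from the right-angled polytope $P$ and the preceding proposition, because the relevant up-set is an amalgam of cube posets rather than a single one; one must therefore set up by hand the product/collar description of $C^*$ near each cusp — using that the cusp cross-sections of $M$ are Euclidean and tiled by the links $L_p$ — and then check that this description is compatible both with the dual cells of the interior faces and with the subcomplex $C$ produced by the previous proposition.
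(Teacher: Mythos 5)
Your proposal is correct, and its overall strategy matches the paper's: treat interior faces exactly as in the preceding proposition, then handle boundary faces separately, and obtain $C\subset C^*$ from the interior faces. Where the two diverge is in the boundary case. The paper dispatches it in one line: the barycenter of a boundary $k$-face $F$ lies on $\partial M^*$, so the dual cell is ``a barycentrically subdivided $(n-k)$-cube cut in half'' --- i.e.\ the same cone-over-the-link construction, but with only the simplices lying on one side of the boundary slice, which still assemble into a combinatorial $(n-k)$-box. You instead build an explicit collar $\tau\times[0,1]$ for $C^*$ near each cusp, where $\tau$ is the dual cube decomposition of the cusp cross-section (itself tiled by the cubical links $L_p$), and read off the dual cell of $F$ as $\tau_F\times[0,1]$. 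These descriptions produce the same object: a subdivided $(n-k)$-cube ``cut in half'' along a middle hyperplane is exactly a subdivided $(n-1-k)$-cube times an interval. Your route costs more --- you must set up the product structure and verify its compatibility with $C$ along the frontier (the mixed containment $F\subset G$ you flag at the end) --- but it makes the cusp geometry of $C^*$ explicit and gives a cleaner global picture; the paper's argument is terser because it stays entirely at the level of the barycentric subdivision and never names the collar. You are also right that the up-set of a boundary face is an amalgam of cube posets rather than a single one, which is precisely why this case cannot be read straight off the right-angled condition on $P$ alone and why the paper has to invoke the ``cut in half'' picture instead.
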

\begin{proof}
Let $F$ be a $k$-face of $M^*$. If $F$ is an interior face, its barycenter lies in the interior of $M^*$, and the dual face is a subdivided $(n-k)$-cube as above. If $F$ is a boundary face, its barycenter lies in $\partial M^*$. Similarly as above, we see that the dual cell is still naturally a $(n-k)$-cube, since it is a barycentrically subdivided $(n-k)$-cube cut in half.

The decomposition $C^*$ is naturally a cube complex, dual to the polyhedral decomposition of $M^*$ as stated. 
By considering the faces that do not intersect the boundary we get the original subcomplex $C \subset C^*$.
\end{proof}

We call $C$ and $C^*$ the \emph{dual} and the \emph{extended dual} cube complexes.

\begin{example} \label{triangle:example}
Let $P\subset \matH^2$ be an ideal triangle. Its three edges are non-adjacent, so we can colour them with a single colour. The resulting manifold $M$ is obtained by mirroring $P$ along its whole boundary and is hence a thrice-punctured sphere. Here $C$ is a $\theta$-shaped graph, a spine of $M$. The compact polygon $P^*$ is a hexagon, $M^*$ is a compact pair-of-pants, and $C^*$ is a cube complex with 6 squares that contains $C$. Two squares are attached to each edge of $C$.
\end{example}

\subsection{The forgetful map}
We introduce a forgetful map from the cube complex $C$ to the standard $c$-cube that is quite simple and useful. Here $c$ is as usual the number of colours.

The $2^c$ vertices of $C$ are dual to the polytopes $P^v$ of the polyhedral decomposition of $M$, with $v \in \matZ_2^c$. We indicate the vertex of $C$ dual to $P^v$ with the same element $v \in \{0,1\}^c$. The forgetful map will send $v$ to $v$, considered as a vertex of $[0,1]^c$.

By construction a $k$-cube in $C$ is dual to a $(n-k)$-face $F$ of the polyhedral decomposition, which is contained in $2^k$ polytopes $P^v$ whose vectors $v$ differ only in some components $v_{i_1},\ldots, v_{i_k}$. The numbers $i_1,\ldots, i_k$ are precisely the colours of the facets in $P$ that contain the image of $F$.
This is a crucial fact that leads us to prove the following.

\begin{prop}
There is a canonical cube complex map from $C$ to the $c$-cube $[0,1]^c$ that sends $v$ to $v$. It sends $k$-cubes to $k$-cubes.
\end{prop}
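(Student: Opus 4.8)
The plan is to construct the map directly on cubes and check compatibility, using the correspondence between $k$-cubes of $C$ and $(n-k)$-faces of the polyhedral decomposition together with the crucial fact just stated (that a $k$-cube dual to $F$ records exactly the $k$ colours $i_1 < \dots < i_k$ of the facets of $P$ containing the image of $F$). First I would fix a $k$-cube $Q$ of $C$, dual to an $(n-k)$-face $F$, and let $i_1 < \dots < i_k$ be those colours. The $2^k$ vertices of $Q$ are the vectors $v \in \{0,1\}^c$ that agree off the coordinates $i_1,\dots,i_k$; as a combinatorial cube, $Q$ is therefore naturally identified with $[0,1]^{\{i_1,\dots,i_k\}}$ inside $[0,1]^c$, namely the $k$-face of the $c$-cube obtained by fixing all coordinates $j \notin \{i_1,\dots,i_k\}$ to the common value $v_j$. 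I would define the forgetful map on $Q$ to be this affine identification: it sends the vertex $v$ to $v \in [0,1]^c$ and extends linearly. By construction it is a bijection from $Q$ onto a $k$-face of $[0,1]^c$, so it sends $k$-cubes to $k$-cubes.

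Next I would check that these cube-by-cube definitions glue to a well-defined cube complex map, i.e. that whenever $Q' \subset Q$ is a face of a cube, the two definitions agree on $Q'$. A face $Q'$ of $Q$ corresponds, under the containment-reversing duality, to a face $F' \supset F$ of the polyhedral decomposition; the colours of the facets containing $F'$ form a subset of $\{i_1,\dots,i_k\}$, and the vertices of $Q'$ are exactly those vertices $v$ of $Q$ lying in the corresponding sub-face of $[0,1]^c$. Hence the affine map on $Q$ restricted to $Q'$ is precisely the affine map assigned to $Q'$, because an affine map of cubes is determined by its values on vertices. This verifies compatibility, so the maps assemble into a single continuous map $C \to [0,1]^c$ that is cellular (indeed a cube complex map, since each cube maps isometrically onto a face, in particular combinatorially to a cube). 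It visibly sends $v$ to $v$.

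The main obstacle I anticipate is purely bookkeeping rather than conceptual: making sure the identification of a $k$-cube $Q$ with a genuine $k$-dimensional face of $[0,1]^c$ is canonical and does not depend on choices in the barycentric subdivision, and that the orientation/ordering of the coordinates $i_1,\dots,i_k$ is handled consistently so that the gluing is strictly (not just up to homeomorphism) well defined. This is controlled by the ``crucial fact'' already isolated before the proposition, which pins down exactly which coordinates of $v$ vary along $Q$; once that is invoked, there are genuinely no choices to make, and the verification reduces to the elementary statement that an affine map between cubes is determined by its vertex values. I would therefore keep the proof short, citing the preceding paragraph for the identification of the varying coordinates and the earlier proposition for the $k$-face/$(n-k)$-cube duality, and simply spell out the affine extension and the face-compatibility check.
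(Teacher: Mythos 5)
Your proof is correct and relies on the same key combinatorial fact as the paper's (that the $2^k$ vertices $v$ of a $k$-cube of $C$ differ only in the $k$ coordinates corresponding to the colours of the facets containing the dual face). The only difference is packaging: the paper defines the map simplicially on barycenters of faces and observes it is cubical, whereas you define it affinely cube-by-cube and verify the gluing on shared faces; both yield the same forgetful map.
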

\begin{proof}
Every vertex $w$ of the barycentric subdivision of $M$ is the barycenter of some face $F$. As we just said, the vectors $v$ of the $P^v$ adjacent to $F$ differ only at some coordinates $i_1,\ldots, i_k$. These vectors span a $k$-face of $[0,1]^c$. We send $w$ to the barycenter of this face. We get a simplicial map from $C$ to the barycentric subdivision of $[0,1]^c$, which is in fact a cube complex map (it sends $k$-cubes to $k$-cubes).
\end{proof}

We call it the \emph{forgetful map} because all the $k$-cubes in $C$ sharing the same vertices are sent to the same $k$-cube in $[0,1]^c$. It is neither injective nor surjective in general. 

\begin{example}
If $P$ is a 1-coloured ideal triangle as in Example \ref{triangle:example} the forgeftul map sends the $\theta$-shaped graph $C$ onto the segment $[0,1]$. 

If $P$ is the 2-coloured ideal octahedron $P$ the cube complex $C$ has four vertices indicated as $(0,0), (0,1), (1,0), (1,1)$, every vertex $(x,y)$ is connected to $(x,1-y)$ and $(1-x,y)$ with four edges each, and there are 12 squares in $C$, all with the same 4 vertices (but attached to different four-uples of edges). The forgetful map maps everything to a single square.
\end{example}


\subsection{Real states and cocycles} \label{states:subsection}
Let again $P\subset \matX^n$ be a coloured right-angled polytope, producing a manifold $M$. Let $C$ be the dual cube complex. We extend the notion of state given in \cite{JNW} by defining a \emph{real state} $s$ to be the assignment of a real number $s(F)$ to each facet $F$ of $P$. The states considered in \cite{JNW} correspond to the case where one uses only the numbers $1$ and $-1$.

In the next lines we show that a real state $s$ on $P$ produces a cohomology class $[s] \in H^1(M, \matR)$ and a \emph{diagonal map} $f_s\colon \widetilde M \to \matR$ on the universal cover $\widetilde M$ of $M$. The whole construction is similar to \cite{JNW}.

Consider the forgetful map $C \to [0,1]^c$. Every edge of $[0,1]^c$ is oriented canonically like the coordinate axis parallel to it. The pull-back of these orientations give a \emph{canonical orientation} on the edges of $C$. On every $k$-cube of $C$ parallel edges are oriented coherently as in Figure \ref{arrows:fig}.

\begin{figure}
 \begin{center}
  \includegraphics[width = 7 cm]{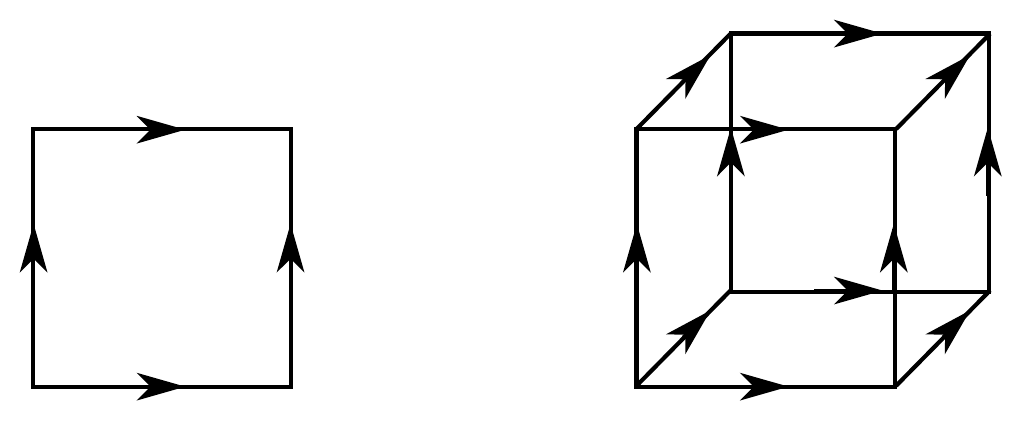}
 \end{center}
 \nota{Parallel edges of every $k$-cube are cooriented.}
 \label{arrows:fig}
\end{figure}

Let $s$ be a real state for $P$. We assign to every oriented edge $e$ of $C$ the real number $s(e) = s(F)$ where $F$ is the facet of $P$ that is the image of the $(n-1)$-face of the polyhedral decomposition dual to $e$.
By assigning a real number to every oriented edge $e$ of $C$ we have just defined a cellular 1-cochain, that we still denote by $s$. 

\begin{prop} \label{cocycle:prop}
The cellular cochain $s$ is a cocycle. 
\end{prop}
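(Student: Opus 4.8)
The plan is to verify the cocycle condition locally, cube by cube. Since $C$ is a cube complex and $s$ is a cellular $1$-cochain, $s$ is a cocycle if and only if $\delta s = 0$, which by definition means $s(\partial \sigma) = 0$ for every $2$-cell $\sigma$ of $C$. Every $2$-cell of $C$ is a square, so it suffices to show that for each square of $C$, the signed sum of the values of $s$ on its four boundary edges vanishes.

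First I would fix a square $\sigma$ in $C$ and recall that $\sigma$ is dual to an $(n-2)$-face $G$ of the polyhedral decomposition of $M$. This face $G$ is contained in exactly two facets of the polyhedral decomposition, hence its image in $P$ is contained in exactly two facets $F_1, F_2$ of $P$, carrying two colours $i_1 \neq i_2$. The four vertices of $\sigma$ are the polytopes $P^v$ containing $G$, and these four vectors $v$ differ only in coordinates $i_1$ and $i_2$; thus $\sigma$ maps to the unit square in coordinate directions $i_1, i_2$ under the forgetful map. The four edges of $\sigma$ are dual to the four $(n-1)$-faces of the decomposition containing $G$: two of them map to $F_1$ in $P$ and the other two map to $F_2$. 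With the canonical coorientation of parallel edges described in Figure \ref{arrows:fig}, the two edges dual to copies of $F_1$ are the pair of parallel edges in direction $i_1$, both cooriented the same way, and likewise for $F_2$ in direction $i_2$.

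Next I would compute $s(\partial \sigma)$. Orienting $\partial \sigma$ as the boundary of the square, we traverse one $i_1$-edge forward, one $i_2$-edge forward, one $i_1$-edge backward, and one $i_2$-edge backward (this is exactly the coorientation pattern of Figure \ref{arrows:fig}). By definition of the cochain $s$, both $i_1$-edges are assigned the value $s(F_1)$ and both $i_2$-edges the value $s(F_2)$. Hence
$$
s(\partial \sigma) = s(F_1) + s(F_2) - s(F_1) - s(F_2) = 0.
$$
Since $\sigma$ was an arbitrary $2$-cell of $C$, we get $\delta s = 0$, so $s$ is a cocycle.

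The only real subtlety — the step I expect to need the most care — is the bookkeeping of orientations: one must be sure that under the forgetful map the two edges of $\sigma$ dual to copies of a fixed facet $F_j$ really are the two \emph{parallel} edges of the square in the $i_j$-direction (not a pair of adjacent edges), and that the canonical coorientation makes them appear with opposite signs in $\partial \sigma$. This follows from the crucial fact stated just before the previous proposition — that the colours labelling the facets of $P$ containing the image of a dual face are exactly the coordinate directions in which the adjacent vectors $v$ differ — together with the fact that the forgetful map sends $k$-cubes to $k$-cubes compatibly with the coorientation. Once this identification is in place, the cancellation is immediate and purely formal, and the same argument works verbatim for $C^*$ in the cusped case, since every $2$-cell of $C^*$ is again a square dual to an $(n-2)$-face (boundary or interior) contained in exactly two facets of the polyhedral decomposition.
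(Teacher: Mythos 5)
Your proof is correct and takes essentially the same approach as the paper: pass to the dual $(n-2)$-face, observe that opposite edges of the square project to the same facet of $P$ and hence receive the same value of $s$, and conclude that the coorientation from Figure \ref{arrows:fig} makes their contributions cancel. (One small bookkeeping slip: the $(n-2)$-face $G$ is contained in four $(n-1)$-faces of the decomposition, not two, though its \emph{image} in $P$ lies in exactly two facets $F_1, F_2$ — which is what your argument actually uses, as your next sentence makes clear.)
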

\begin{proof}
We need to prove that the sum of the contributions on every square $Q$ of $C$ is zero. The edges of $Q$ are oriented as in Figure \ref{arrows:fig}. The square $Q$ is dual to a $(n-2)$-face $F$ of the polyhedral decomposition, which projects to a $(n-2)$-face of $P$, that is in turn the intersection of two facets $F_1$ and $F_2$ of $P$. The four edges of $Q$ are dual to the four $(n-1)$-faces of the decomposition containing $F$. By construction, these four faces project alternatively to $F_1$ and $F_2$. Two opposite edges $e,e'$ of $Q$ are dual to faces that project to the same facet $F_i$ and hence $s(e)=s(e')$. Their contributions to the sum is zero since they are oriented as in Figure \ref{arrows:fig} (with opposite directions with respect to the canonical orientation of $\partial Q$). So the overall contribution on $Q$ is zero and $s$ is a cocycle.
\end{proof}

Every real state $s$ determines a cohomology class $[s] \in H^1(C,\matR) = H^1(M,\matR)$. (Remember that $M$ deformation retracts onto $C$.) We can characterise easily which states yield the same classes:

\begin{prop}
Let $s,s'$ be two real states. We have $[s] = [s']$ if and only if there is a $\lambda = (\lambda_1,\ldots, \lambda_c)\in \matR^c$ such that $s'(F) = s(F) + \lambda_i$ where $i$ is the colour of $F$, for every facet $F$ of $P$.
\end{prop}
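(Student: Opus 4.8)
The plan is to prove both implications by working at the level of cochains on the dual cube complex $C$, using the explicit description of the cocycle $s$ given in Proposition~\ref{cocycle:prop}. First I would settle the easy direction. Suppose $\lambda = (\lambda_1,\ldots,\lambda_c) \in \matR^c$ is given and $s'(F) = s(F) + \lambda_i$ whenever $i$ is the colour of $F$. The difference $s'-s$ is then the cochain that assigns to every oriented edge $e$ of $C$ the number $\lambda_{i(e)}$, where $i(e)$ is the colour of the facet dual to $e$. I claim this is a coboundary. Recall the forgetful map $C \to [0,1]^c$ sending a vertex $v \in \matZ_2^c$ to the corresponding vertex of the cube, and sending an edge of colour $i$ to an edge of $[0,1]^c$ parallel to the $i$-th coordinate axis (with the canonical orientation). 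Define the $0$-cochain $\mu$ on $C$ by $\mu(v) = \sum_{j : v_j = 1} \lambda_j$, i.e.\ $\mu$ is the pull-back under the forgetful map of the linear function $x \mapsto \sum_j \lambda_j x_j$ on $[0,1]^c$. Then for an oriented edge $e$ of colour $i$ from $v$ to $v+e_i$ (with the canonical orientation pointing in the positive $i$-direction) we get $(\delta\mu)(e) = \mu(v+e_i) - \mu(v) = \lambda_i = (s'-s)(e)$. Hence $s' - s = \delta\mu$ and $[s] = [s']$ in $H^1(C,\matR) = H^1(M,\matR)$.

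For the converse, suppose $[s] = [s']$, so $s' - s = \delta\mu$ for some $0$-cochain $\mu$ on $C$, i.e.\ a function $\mu\colon \matZ_2^c \to \matR$ on the vertex set of $C$. I must produce the vector $\lambda$. The idea is that $\delta\mu$ must be ``colour-constant'' on edges, because $s'-s$ is: indeed $s$ and $s'$ both assign to an edge a value depending only on the colour of the dual facet, so $(s'-s)(e)$ depends only on the colour $i(e)$, say it equals $\nu_i$. Thus $\mu(w) - \mu(v) = \nu_i$ for every oriented edge from $v$ to $w$ of colour $i$. The key structural fact I would invoke is that $C$ has, for each pair of adjacent vertices $v, v+e_i$ in $\matZ_2^c$, at least one edge of colour $i$ joining them — this follows from the construction of $C$, since $v$ and $v+e_i$ correspond to polytopes $P^v, P^{v+e_i}$ glued along a common facet of colour $i$, which is dual to such an edge. (One should check the colour set actually realized equals the set of all $c$ colours; this holds since every colour is painted on at least one facet, so for every $i$ there is at least one adjacent pair $v, v+e_i$ joined by a colour-$i$ edge, and then by symmetry all such pairs are.) Consequently $\mu(v + e_i) - \mu(v) = \nu_i$ for \emph{all} $v \in \matZ_2^c$ and all $i$. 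Setting $\lambda_i := \nu_i$ and using that $\matZ_2^c$ is generated by the $e_i$, one gets $\mu(v) = \mu(0) + \sum_{j: v_j = 1}\lambda_j$ for every $v$; in particular the constants are consistent (there is no obstruction going around a square $v, v+e_i, v+e_i+e_j, v+e_j$, since $\nu_i + \nu_j = \nu_j + \nu_i$). Therefore $s'-s = \delta\mu$ is exactly the cochain $e \mapsto \lambda_{i(e)}$, which unwinds to $s'(F) = s(F) + \lambda_i$ for $i$ the colour of $F$, as desired.

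The main obstacle is the converse direction, and specifically the verification that a coboundary $\delta\mu$ which happens to take colour-dependent values on edges actually comes from a vector $\lambda \in \matR^c$ in the stated way. The content there is the combinatorial claim that the $1$-skeleton of $C$ surjects (on edges, up to colour) onto the $1$-skeleton of $[0,1]^c$ in a way that realizes every colour between every adjacent pair of vertices of the cube — this is what forces the increments $\mu(v+e_i)-\mu(v)$ to be independent of $v$. I would phrase this cleanly by noting that $s' - s$, being both a coboundary and a pull-back of a cochain on $[0,1]^c$ under the forgetful map that is injective on vertices, descends to a coboundary on $[0,1]^c$, whose primitive is unique up to a global additive constant and is therefore linear in the coordinates; pulling back gives the formula. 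Once this structural point is in place, the rest is bookkeeping with the definitions of $s$, the canonical orientation, and the forgetful map.
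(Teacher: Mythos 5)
Your forward direction is fine and is a mild repackaging of the paper's: instead of treating one colour at a time with an indicator $0$-cochain, you sum them into $\mu(v)=\sum_{j:v_j=1}\lambda_j$ and check $s'-s=\delta\mu$ in one shot. No issue there.

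The converse, however, contains a genuine gap, and it is at the crux. You write that ``$s$ and $s'$ both assign to an edge a value depending only on the colour of the dual facet, so $(s'-s)(e)$ depends only on the colour $i(e)$.'' This is false: a real state assigns a value to each \emph{facet} of $P$, and distinct facets with the same colour will in general receive distinct values (any nonzero balanced state does exactly this). More to the point, the assertion that $s'(F)-s(F)$ depends only on the colour of $F$ is precisely the conclusion of the proposition, so your argument is circular. The ``clean'' reformulation at the end inherits the same problem: saying that $s'-s$ is ``a pull-back of a cochain on $[0,1]^c$ under the forgetful map'' is just another way of asserting the colour-constancy you need to prove. (The consistency check around a square is also misplaced: $\mu$ is \emph{given} by the hypothesis $s'-s=\delta\mu$, so there is nothing to construct or verify there.) The correct argument runs in the opposite direction, and you already have the needed structural fact. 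Given facets $F_1,F_2$ of $P$ with the same colour $i$, they are dual to two edges $e_1,e_2$ of $C$ joining the \emph{same} pair of vertices $0$ and $e_i$, since every colour-$i$ facet glues $P^0$ to $P^{e_i}$. Then $\delta\mu(e_1)=\mu(e_i)-\mu(0)=\delta\mu(e_2)$, hence $s'(F_1)-s(F_1)=s'(F_2)-s(F_2)$, and $\lambda_i:=s'(F)-s(F)$ is well defined. This is the paper's argument in disguise: the paper evaluates the cohomologous cocycles $s$ and $s'$ on the $1$-cycle $e_1-e_2$ and gets the same identity.
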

\begin{proof}
We suppose that here is a $\lambda\in\matR^c$ that fulfills the requirement, and we show that $[s] = [s']$. It suffices to consider the case where $\lambda_j = 0$ for all $j$ distinct from some fixed $i$, the general case will follow by iterating.  Consider the 0-cochain $\alpha$ that assigns $\lambda_{i}$ to all the vertices $v$ with $v_{i}=0$ and 0 to the others. One can verify that $s' = s + d\alpha$ and hence $[s] = [s']$. 

Conversely, we suppose that $[s] = [s']$ and prove that there is a $\lambda$ that fulfills the requirement. We identify $P^0$ with $P$. Two facets $F_1, F_2$ of $P^0=P$ sharing the same colour $i$ are dual to two edges of $C$ connecting $P^0$ to $P^{e_i}$. These two edges form a 1-cycle $c$ (after reversing the orientation of the second edge), and we have $s(c) = s(F_1)-s(F_2)$. Since $s(c)$ depends only on $[s]$, we deduce that $s(c) = s'(c)$. In particular we may define unambiguously $\lambda_i = s'(F) - s(F)$ by taking any facet $F$ with colour $i$, and the resulting $\lambda$ is as required.
\end{proof}

A real state $s$ is \emph{balanced} if the sum of all the $s(F)$ among the facets $F$ sharing the same colour is zero, for every colour. We can deduce easily the following.

\begin{cor}
For every state $s$ there is a unique balanced state $s'$ with $[s'] = [s]$.
\end{cor}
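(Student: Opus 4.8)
The plan is to combine the two previous propositions. The second-to-last proposition tells us that $[s'] = [s]$ precisely when $s'$ and $s$ differ by a vector $\lambda \in \matR^c$ acting by colour, so the cohomology class $[s]$ determines an affine family of states $\{s + \lambda : \lambda \in \matR^c\}$ (where $\lambda$ acts on a facet by adding $\lambda_i$, with $i$ its colour). Within this family I want to find exactly one balanced representative, which amounts to solving a linear system: for each colour $i$, if $n_i$ denotes the number of facets carrying colour $i$ and $\sigma_i(s) = \sum_{F \text{ has colour } i} s(F)$ is the current sum, then replacing $s$ by $s + \lambda$ changes $\sigma_i$ to $\sigma_i(s) + n_i \lambda_i$.

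First I would observe that since every colour is painted on at least one facet (this is part of the definition of a colouring), we have $n_i \geq 1$ for every $i$. Hence the equation $\sigma_i(s) + n_i \lambda_i = 0$ has the unique solution $\lambda_i = -\sigma_i(s)/n_i$. Setting $s' = s + \lambda$ with this choice of $\lambda$ produces a state that is balanced by construction, and $[s'] = [s]$ by the previous proposition. This establishes existence.

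For uniqueness, suppose $s'$ and $s''$ are two balanced states with $[s'] = [s''] = [s]$. By the previous proposition $s'' = s' + \mu$ for some $\mu \in \matR^c$. Summing over the facets of colour $i$ gives $\sigma_i(s'') = \sigma_i(s') + n_i \mu_i$; since both states are balanced, the left and the first term on the right vanish, leaving $n_i \mu_i = 0$, hence $\mu_i = 0$ because $n_i \geq 1$. Therefore $\mu = 0$ and $s'' = s'$.

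There is no real obstacle here — the corollary is a routine consequence of the two preceding propositions together with the nondegeneracy condition $n_i \geq 1$; the only point to be careful about is to invoke that every colour is actually used, which is exactly what guarantees the linear system is solvable with a unique solution. I would present the argument in the compact form above rather than spelling out the cochain computations, since those are already contained in the proof of the preceding proposition.
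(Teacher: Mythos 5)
Your argument is correct and is precisely the ``easy deduction'' the paper leaves implicit: the preceding proposition identifies the cohomology class of $s$ with the orbit of $s$ under the $\matR^c$-action by colour-wise shifts, and the balanced representative is the unique solution of the $c$ decoupled linear equations $\sigma_i(s)+n_i\lambda_i=0$, which is solvable and unique because each $n_i\geq 1$ (every colour is used). Nothing to add.
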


A balanced state is like a combinatorial harmonic representative.
The balanced states form naturally a subspace of $H^1(C,\matR) = H^1(M,\matR)$. This subspace actually coincides with $H^1(M,\matR)$ on the three manifolds $W, X, Z$ that we construct in this paper, but this does not hold in general: there may be cohomological elements that are not represented by states.

\begin{example}
Consider the $n$-cube with its $n$-colouring, producing the $n$-torus $M$. A balanced state assigns two opposite numbers to each opposite facets. The balanced states form a vector space of dimension $n$, naturally identified with $H^1(M, \matR)$. If we use more than $n$ colours, we still get a flat $n$-torus, but the balanced states form only a proper subspace of $H^1(M, \matR)$.

Consider the ideal regular hyperbolic octahedron with its 2-colouring, producing the complement $M$ of the chain link in Figure \ref{chainlink:fig}. A balanced state assigns two 4-uples of real numbers with sum zero. The balanced states form a space of dimension 6, naturally isomorphic to $H^1(M, \matR)$.
\end{example}

\subsubsection{The cusped case}
If $P\subset \matH^n$ is cusped, we may need to extend the cocycle $s$ to the extended cubulation $C^*$, so that both $s$ and its extension represent the same element in $H^1(C,\matR) = H^1(C^*,\matR) = H^1(M,\matR)$. There is a canonical way to do it that we now explain.

There are two types of vertices in $C^*$, the \emph{boundary vertices} that are contained in $\partial M^*$, and the \emph{interior vertices} that are not: the latter ones are precisely the vertices of $C$.
Correspondingly, there are two types of edges in $C^* \setminus C$, those that connect an interior to a boundary vertex and those that connect two boundary vertices. We orient every edge $e$ of the first type towards the boundary, and we set $s(e)=1$. An edge $e$ of the second type is parallel to some edge of $C$ and hence inherits from it both the canonical orientation and the real number $s(e)$. It is easy to verify that this extension is a cocycle in $C^*$ that represents the same cohomology class in $H^1(M,\matR)$.

\subsection{Diagonal maps} \label{diagonal:subsection}
Let $s$ be a real state on $P$. We now suppose that $s$ is nowhere vanishing, that is $s(F)\neq 0$ for all the facets $F$ of $P$. This allows us to modify the orientation of the edges of $C$ and $C^*$, by reversing the canonical orientation of $e$ if and only if $s(e)<0$. We call the resulting orientation the \emph{orientation induced by $s$}. Parallel edges in every $k$-cube are still cooriented, because they were all assigned the same real number (see the proof of Proposition \ref{cocycle:prop}). We fix henceforth this orientation, and modify accordingly the signs of all the labels $s(e)$ so that $s(e)>0$ on every edge $e$. Orientations and labels have changed but of course $s$ identifies the same cohomology class as before.

We identify inductively each $k$-cube of $C$ or $C^*$ with the standard Euclidean $k$-cube $[0,1]^k$ in an edge-orientation-preserving way: that is, the unique vertex from which all edges are departing is sent to $0$. The identification is meant to be fixed only up to permuting the $k$ coordinates. 

We lift the orientation of the edges, the cocycle $s$, and the identifications with the Euclidean cubes from the cube complex $C$ to its universal cover $\widetilde C$. The state $s$ induces a \emph{diagonal map} $f\colon \widetilde C \to \matR$ as follows. Choose a basepoint vertex $v_0$ in $\tilde C$ and set $f(v_0) = 0$. Extend this map to every $k$-cube via the diagonal map
$$f(x_1, \ldots, x_k) = f(0) + s(e_1)x_1 + \cdots + s(e_k)x_k$$ 
where $e_i$ is the edge contained in the $i$-th axis. The constant $f(0)$ is chosen inductively on each $k$-cube so that this map matches with the previously assigned cubes. There is no ambiguity since $\tilde C$ is simply connected.

The resulting $f\colon \widetilde C\to \matR$ is a Morse function, in the sense of \cite{BB}. Using the basepoint $v_0$ and the identification of $\pi_1(C)$ with the deck transformations group, the map $f$ induces a homomorphism $\pi_1(C) \to \matR$ and hence a class in $H^1(M; \matR)$ that is equal to $[s]$. If $[s] \in H^1(M;\matZ)$  the class has integral periods and therefore $f$ descends to a map $f\colon C \to \matR/_{\matZ} = S^1$.

In the cusped case we actually work with $C^*$ instead of $C$. In both cases we get a map $f\colon \widetilde M \to \matR$ on the universal cover $\widetilde M$, called a \emph{diagonal map}, which descends to a map $f\colon M \to \matR/_\matZ = S^1$ if the class is integral. We sometimes denote it as $f_s$ to stress the dependence on $s$.

\subsection{The ascending and descending links} \label{ad:subsection}
As we have seen, a nowhere vanishing real state $s$ on $P$ defines a diagonal map $f\colon \widetilde M \to \matR$. We can now apply Bestvina -- Brady theory to study this map topologically.

Let $v$ be a vertex of $C$ (or $C^*$ in the cusped case). Recall that the edges of $C$ (or $C^*$) are oriented by $s$. Let $\lk(v)$ be the link of $v$ in $C$ (or $C^*$). By construction $\lk(v)$ is an abstract simplicial complex homeomorphic to $S^{n-1}$ (or $D^{n-1}$ if $v$ is a boundary vertex of $C^*$). Every vertex of $\lk(v)$ indicates an oriented edge of $C$ (or $C^*$) incident to $v$, and we assign to it the \emph{status} I (In) or O (Out) according to whether the oriented edge points towards $v$ or away from $v$. 

Following \cite{BB}, we define the \emph{ascending link} $\lk_\uparrow(v)$ (respectively, \emph{descending link} $\lk_\downarrow(v)$) to be the subcomplex of $\lk(v)$ generated by all the vertices with status O (respectively, I).

In the following we suppose that $\dim P \leq 4$, as it will certainly be the case here. The condition stated below must hold at every $v$ in $C$ (or $C^*$ in the cusped case). We refer to \cite{RS} for the PL notions of collapse and of $k$-dimensional polyhedron.

\begin{teo} \label{ad:teo}
Suppose that at every $v$ both the ascending and descending links collapse to a connected polyhedron of dimension $\leq 1$, and to a point if $v$ is a boundary vertex. Then $f\colon \widetilde M \to \matR$ can be smoothened to a Morse function with only critical points of index 2.

If the ascending and descending links collapse to points at every $v$, then $f$ can be smoothened to a fibration.
\end{teo}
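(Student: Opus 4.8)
The plan is to run the standard Bestvina--Brady machine for PL Morse functions adapted to the circle-valued setting, using the hypothesis on ascending and descending links to control the local topology of the level sets. First I would work with the lifted map $f\colon \widetilde M \to \matR$ on the universal cover (or rather on $\widetilde C$, resp. $\widetilde{C^*}$ in the cusped case). The map $f$ is affine on each cube, and its critical points in the PL sense are exactly the vertices $v$ of $C$ (or $C^*$): near a generic interior point of a cube $f$ has no critical behaviour because the diagonal coefficients $s(e_i)$ are all nonzero. By Bestvina--Brady theory, the homotopy type of the sublevel sets changes, as one crosses the $f$-value of $v$, by coning off the ascending link $\lk_\uparrow(v)$ on the way up and the descending link $\lk_\downarrow(v)$ on the way down; more precisely the level set just above $v$ is obtained from the one just below by attaching the ascending-link part, and one must read off what kind of handle (if any) this corresponds to.

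Next I would convert this into a handle-attachment statement. If at a vertex $v$ both $\lk_\uparrow(v)$ and $\lk_\downarrow(v)$ collapse to a point, then passing $v$ does not change the homotopy type of the level set at all, and after a small PL-to-smooth perturbation supported near $v$ the map has no critical point there; doing this at every $v$ turns $f$ into a submersion, hence (since $M$ is compact, or $M^*$ is compact with $f|_{\partial}$ already a submersion) a fibration over $S^1$ by Ehresmann. This is exactly the BB ``Morse Lemma'' in the degenerate case: collapsing links means the vertex is not really critical. For the cusped case the requirement that the links at boundary vertices collapse to a point guarantees that $f|_{\partial M^*}$ is itself a fibration, consistent with the definition of a circle-valued Morse function given in the introduction, and ensures the cube complex $C^*$ (which equals $M^*$) carries the map we want.

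The main steps, then, are: (1) cite/establish that the PL map $f$ on $\widetilde M$ is a PL Morse function whose critical vertices are exactly the $0$-cells, with ascending/descending links as described --- this is essentially the content of Section \ref{diagonal:subsection} together with \cite{BB}; (2) the Bestvina--Brady Morse Lemma: if $\lk_\uparrow(v)$ (resp. $\lk_\downarrow(v)$) is contractible, better yet collapsible, then the level set does not change up to the relevant collapse when passing $v$ from below (resp. above); (3) deduce that if \emph{all} ascending and descending links are collapsible to a point, the sublevel sets are all PL-homeomorphic after the collapses absorb the vertices, so $f$ is, up to homotopy of the map and after a smoothing, a locally trivial fibration; (4) the smoothing: replace the PL function by a nearby smooth one agreeing with it away from small neighbourhoods of the vertices, which is possible since the local models are standard, and check that no new critical points are introduced --- this is where one uses that the collapse is to a point, so the local model near $v$ is (PL-)trivial; (5) since $[s]$ is integral the map descends to $M \to S^1$, and the fibration on $\widetilde M$ descends to a fibration on $M$.

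I expect the main obstacle to be step (4), the passage from PL to smooth while keeping precise control of critical points: one must exhibit, near each vertex $v$, an explicit smooth model of $f$ that is a submersion and matches the affine diagonal map outside a small ball, and argue that the collapsibility of $\lk_\uparrow(v)$ and $\lk_\downarrow(v)$ to a point is exactly the combinatorial input that makes this possible (it says the ``upper'' and ``lower'' halves of a small sphere around $v$ are each PL-standard disks, so the pair behaves like a nonsingular level-set crossing). The analogous argument in the first half of Theorem \ref{ad:teo} --- collapse to a connected $1$-complex rather than a point --- is the source of the index-$2$ critical points, and the fibration case is the degenerate specialisation where that $1$-complex is a single point, contributing a ``fake'' critical point of trivial type that the smoothing simply removes. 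I would lean on \cite{BB} and \cite{RS} for the PL technology and keep the new content to verifying that our specific diagonal maps fit the hypotheses, which is deferred to the manifold-by-manifold analysis in Section \ref{manifolds:section}.
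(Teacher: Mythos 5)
Your overall strategy matches the paper's: lift to the universal cover, recognize the diagonal map as a Bestvina--Brady PL Morse function, invoke the Morse Lemma of \cite{BB} to control what happens upon crossing a vertex, and use the hypothesis on the links to deduce that a product structure (resp.\ a 2-handle attachment) arises. So the skeleton is the same, and your identification of the index-$2$ phenomenon (cone over a 1-complex becomes a family of 2-handles) is exactly right.

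Where you diverge, and where your version is weaker, is the smoothing step, which you flag yourself as the main obstacle. You propose to replace the PL map by a nearby smooth map, ``agreeing with it away from small neighbourhoods of the vertices,'' and to ``check that no new critical points are introduced.'' This is a delicate program: the PL map near a vertex $v$ is not a standard local model (the link of $v$ is a nontrivial simplicial $(n-1)$-sphere or disk, and the diagonal map has varying slopes on the incident cubes), so there is no canonical smooth replacement, and for arbitrary PL functions one can introduce extra Morse critical points when smoothing. The paper sidesteps this entirely by never smoothing the \emph{function}: it first extracts from $f$ a PL \emph{handle decomposition} of $\widetilde M$ (product pieces between critical heights, 2-handles at critical heights), then smooths the handle decomposition as a manifold-theoretic object --- possible in dimension $n\leq 4$ by PL/DIFF equivalence \cite{HM,M} --- and only at the end converts the smooth handle decomposition back into a smooth Morse function with the prescribed indices. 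This reorganisation is the whole point of the step; you should either cite and adopt it, or supply a genuine argument for your local-perturbation claim, which as stated is a gap.

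Two smaller inaccuracies: the BB Morse Lemma (Lemma 2.5 of \cite{BB}, which the paper quotes) describes the evolution of \emph{sublevel} sets --- $\widetilde M_{(-\infty,t+\varepsilon]}$ collapses to $\widetilde M_{(-\infty,t-\varepsilon]}$ with the cone on $\lk_\downarrow(v)$ attached --- not a direct cone-attachment on the level set, as your phrasing ``the level set just above $v$ is obtained from the one just below by attaching the ascending-link part'' suggests; the fiber changes by a surgery/handle move, and this is deduced from the sublevel-set collapse, not stated directly. Also, your appeal to Ehresmann needs properness of $f$ on the (noncompact) cover, which you should at least note; the paper avoids Ehresmann altogether by reading off the product structure directly from the PL collapse.
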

\begin{proof}
For a subspace $J\subset \matR$, we define $\widetilde M_J = f^{-1}(J)$. We write $\widetilde M_t = M_{\{t\}}$. If an interval $J \subset \matR$ contains no image of a vertex of $\widetilde C$ (or $\widetilde C^*$), then $\widetilde M_J$ is a union of prisms and hence PL homeomorphic to a product submanifold $M_t \times J$, for any $t \in J$. 

Suppose that $t\in \matR$ is the image of some vertex $v$. We suppose for the sake of simplicity that no other vertex is sent to $[t-\varepsilon, t+\varepsilon]$. The following argument is of local nature and easily extends to the general case.

As shown in \cite[Lemma 2.5]{BB}, the submanifold $\widetilde M_{(-\infty,t+\varepsilon]}$ collapses onto the union $\widetilde M_{(-\infty,t-\varepsilon]} \cup C_v$ where $C_v$ is the cone over the simplicial complex $\lk_\downarrow(v)$, considered inside $\widetilde M_{t-\varepsilon}$. 

By hypothesis $\lk_\downarrow(v)$ collapses either to a point or to a 1-dimensional polyhedron in $\widetilde M_{t-\varepsilon}$. In the first case
the manifold $\widetilde M_{[a,t+\varepsilon]}$ collapses to $\widetilde M_{[a,t-\varepsilon]}$, and hence $\widetilde M_{[t-\varepsilon, t+ \varepsilon]}$ is PL homeomorphic to a product $M_t \times [-\varepsilon, + \varepsilon]$. In the second case $\widetilde M_{[a,t+\varepsilon]}$ collapses to $\widetilde M_{[a,t-\varepsilon]}$ with a cone over a 1-polyhedron in $\widetilde M_{t-\varepsilon}$. Therefore $\widetilde M_{[a,t+\varepsilon]}$ is PL homeomorphic to $\widetilde M_{[a,t-\varepsilon]}$ with some $2$-handles attached. Recall that this is allowed only when $v$ is not a boundary vertex.

By what just proved, the function $f$ determines a PL handle decomposition for $\widetilde M$, where we start with any regular fiber and proceed both in the positive and in the negative direction by attaching 2-handles whenever we cross a vertex $v$ whose ascending or descending link collapses onto a non-simply connected 1-polyhedron. Since we are in low dimension $n\leq 4$, this handle decomposition can be smoothened \cite{HM, M}. The smooth handle decomposition can finally be transformed into a Morse function with only index 2 critical points.
\end{proof}

\begin{rem}[Integral class] 
If $[s]$ is integral, the smoothenings can be done equivariantly so that $f$ descends to a cicle-valued Morse function $f\colon M \to \matR/_\matZ = S^1$ with only critical points of index 2.
\end{rem}

\begin{rem} [The algorithm] \label{algorithm:rem}
We describe a simple algorithm to compute the ascending and descending link at every vertex $v$ of $C$ when $P$ is compact. The link of $v$ in $C$ is a simplicial complex dual to $P^v$. Every vertex $w$ of the link corresponds dually to a facet $F$ of $P^v$. 

Recall that the vertices of $C$ are identified with $\matZ_2^c$.
At $v=0$, the status of a vertex $w$ of the link is O and I depending on whether $s(F)$ is positive or negative. Whenever we pass from $v$ to $v+e_i$, the stati of the vertices dual to the facets $F$ coloured  with $i$ are switched, while all the others stay the same. Via these simple moves we can determine the stati and hence the ascending and descending links of all the vertices $v\in \matZ_2^c$. This is the basis for the combinatorial game of \cite{JNW}.
\end{rem}

In what follows, a \emph{hyperoctahedron} is the $(n-1)$-dimensional simplicial complex dual to the boundary of a $n$-cube. It is of course homeomorphic to $S^{n-1}$.

\begin{example}[$n$-cubes] \label{cubes:example}
Let $P$ be a $n$-cube with some colouring. This produces a flat $n$-torus $M$. Let $s$ be a nowhere vanishing state for $P$. If there are two opposite facets $F_1, F_2$ of $P$ with the same colour and $s(F_1)$, $s(F_2)$ have opposite signs, then all the ascending and descending links collapse to points and $f_s$ is a fibration. 

To prove this, note that the link of a vertex $v$ of $C$ is a hyperoctahedron. Using the previous remark we deduce that at every $v$ we have two opposite vertices of the hyperoctahedron with opposite status I and O. This implies easily that both the ascending and the descending links collapse to these opposite vertices, that is to points.
\end{example}

\subsubsection{The cusped case} 
Let $P\subset \matH^n$ have some ideal vertices, equipped with a colouring and a nowhere vanishing real state $s$. We now expose a simple criterion that, when verified, allows us to forget about the boundary vertices, and to use $C$ instead of $C^*$ for the interior vertices.

\begin{prop} \label{ideal:prop}
If every ideal vertex of $P$ is adjacent to two facets $F_1, F_2$ with the same colour and with $s(F_1), s(F_2)$ having opposite signs, the hypothesis of Theorem \ref{ad:teo} is satisfied at every boundary vertex of $C^*$. Moreover, to check the hypothesis on each interior vertex $v$, it suffices to consider the link of $v$ in $C$ instead of $C^*$.
\end{prop}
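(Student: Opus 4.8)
The plan is to reduce both assertions to Example~\ref{cubes:example}, applied not to $M$ itself but to the \emph{links of the ideal vertices of $P$}, after unwinding the local structure of the extended dual cube complex $C^*$ near the cusps. Fix an ideal vertex $\bar v$ of $P$. As recalled in Section~\ref{cube:subsection}, its link is a compact Euclidean right-angled polytope, hence combinatorially an $(n-1)$-cube $\square$, whose $2(n-1)$ facets are the facets of $P$ through $\bar v$, paired into $n-1$ pairs of opposite facets, each pair consisting of two disjoint facets of $P$ through $\bar v$. The colouring and the state of $P$ restrict to a colouring of $\square$ (adjacent facets of $\square$ get distinct colours) and to a nowhere vanishing state $G\mapsto s(F_G)$ on $\square$, where $F_G$ is the facet of $P$ over $G$; mirroring $\square$ along its facets yields a flat $(n-1)$-torus $N_{\bar v}$, a cusp cross section of $M$. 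The hypothesis of the proposition says precisely that $\square$, with this colouring and state, satisfies the hypothesis of Example~\ref{cubes:example}. Hence every ascending and descending link of the diagonal map on $N_{\bar v}$ collapses to a point; more precisely, at each vertex of $N_{\bar v}$ the link is a hyperoctahedron containing a pair of antipodal vertices of opposite status, and its ascending (resp.\ descending) subcomplex is a \emph{cone} whose apex is the unique ``Out'' (resp.\ ``In'') vertex of that antipodal pair. This cone refinement is what I will feed into the collapsing steps below.

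Consider next a \emph{boundary vertex} $u$ of $C^*$. It is the barycenter of a truncation facet $\square^w$ of some copy $(P^*)^w$, sitting over the relevant $\bar v$. From the construction of $C^*$ one reads off that $\lk(u)$ is the join $p_0*L$, where $p_0$ is the vertex corresponding to the unique edge $e_0$ leaving $u$ into the interior of $M^*$, and $L$ is the link of $u$ in the dual cube complex of $\partial M^*$ along this cusp; the latter is a copy of the dual cube complex of $N_{\bar v}$, so $L$ is the hyperoctahedron link of a vertex of $N_{\bar v}$ and carries exactly the statuses of the diagonal map on $N_{\bar v}$. Since $s(e_0)=1$ and $e_0$ points towards $u$, the vertex $p_0$ has status I. Therefore $\lk_\downarrow(u)=p_0*L_\downarrow$ is a cone, hence collapses to the point $p_0$, while $\lk_\uparrow(u)=L_\uparrow$ collapses to a point by the previous paragraph. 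So the hypothesis of Theorem~\ref{ad:teo} (in its ``collapse to a point'' form) holds at $u$, which proves the first assertion.

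Finally consider an \emph{interior vertex} $v$ of $C^*$, i.e.\ a vertex of $C$. Then $\lk_{C^*}(v)\cong S^{n-1}$ is dual to $\partial P^*$, and $\lk_C(v)$ is the full subcomplex on the vertices dual to the coloured facets; the remaining vertices $p$ — one for each ideal vertex $\bar v$ of $P$, corresponding to a truncation facet of $(P^*)^v$ — all have status O, because the edge from $v$ to the boundary carries $s=1$ and points outward. Consequently no such $p$ enters a descending link, so $\lk_\downarrow^{C^*}(v)=\lk_\downarrow^{C}(v)$ verbatim. For the ascending links: the open stars of distinct $p,p'$ in $\lk_{C^*}(v)$ are disjoint — the truncation facets of $(P^*)^v$ are pairwise disjoint, so no face of $(P^*)^v$ lies in two of them, hence no simplex of $\lk_{C^*}(v)$ contains both $p$ and $p'$ — and $\operatorname{star}(p,\lk_{C^*}(v))=p*H_p$, with $H_p$ the hyperoctahedron link of the edge $[v,p]$, again a link of a vertex of $N_{\bar v}$ with matching statuses. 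Thus the part of $\lk_\uparrow^{C^*}(v)$ containing $p$ is $p*(H_p)_\uparrow=[p,q_p]*S_p$, where $(H_p)_\uparrow=q_p*S_p$ is a cone lying inside $\lk_\uparrow^{C}(v)$ (as noted above), and this collapses onto $\{q_p\}*S_p=(H_p)_\uparrow$ — collapse the free vertex $p$ of $[p,q_p]$ and take the join with $S_p$. Deleting the $p$ one at a time, which is legitimate since their stars are disjoint, therefore collapses $\lk_\uparrow^{C^*}(v)$ onto $\lk_\uparrow^{C}(v)$. Combining, if the ascending and descending links computed in $C$ collapse to connected polyhedra of dimension $\le 1$, then so do those computed in $C^*$; this is the second assertion.

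The step I expect to be the real work is the local analysis of $C^*$ near the cusps used throughout — the join decomposition $\lk_{C^*}(u)=p_0*L$ at boundary vertices, the cone decomposition $\operatorname{star}(p,\lk_{C^*}(v))=p*H_p$ at interior vertices, and above all the identification of the Bestvina--Brady statuses on $L$ and on $H_p$ with those of the diagonal map on the cusp torus $N_{\bar v}$, since this is exactly what licenses the appeal to Example~\ref{cubes:example}; once that is in place, the collapsing arguments are routine.
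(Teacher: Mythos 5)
Your proof is correct and follows essentially the same approach as the paper's: you identify the boundary links as cones over hyperoctahedra with the inward edge $e_0$ giving $p_0$ status I, observe that the descending links in $C$ and $C^*$ agree at interior vertices because the filling vertices all have status O, and collapse the ascending link in $C^*$ onto the one in $C$ by collapsing away each added cone vertex, using the hyperoctahedron pair of opposite statuses exactly as in Example~\ref{cubes:example}. The paper states these steps more briefly; your version just makes the cone decompositions and elementary collapses explicit.
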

\begin{proof}
The link in $C^*$ of a boundary vertex is a cone over a hyperoctahedron. As in Example \ref{cubes:example}, the hypothesis implies that there are two opposite vertices of the hyperoctahedron with opposite status, and from this we can easily deduce that the ascending and descending links both collapse to points.

The $v$ be an interior vertex of $C^*$. We now prove that it suffices to consider its link in $C$ instead of $C^*$. The link of $v$ in $C$ is dual to $P$, with a hole corresponding to every ideal vertex of $P$. Indeed the link is homeomorphic to $S^{n-1}$ minus some open balls (the holes). The boundary of each hole is a hyperoctahedron. The link of $v$ in $C^*$ is obtained by filling all these holes, thus getting a simplicial complex homeomorphic to $S^{n-1}$. Each hole is filled by adding a cone over the corresponding hyperoctahedron, with a new central vertex $w$ that indicates an edge connecting $v$ to a boundary vertex of $C^*$. The vertex $w$ has status O because we have oriented all these edges towards the boundary.

The descending links of $v$ in $C$ and $C^*$ are the same. The ascending link in $C^*$ is obtained from that in $C$ by adding the new central vertices $w$ at each hyperoctahedron. By hypothesis there are two opposite vertices in each hyperoctahedron with opposite status. As already noticed, this implies that the link of $w$ in the ascending link collapses to a point. Therefore the ascending link in $C^*$ collapse to the ascending link in $C$. So it suffices to prove Theorem \ref{ad:teo} using the link of $v$ in $C$ instead of $C^*$.
\end{proof}

One may in fact prove that this condition is also necessary, but we will not need that. Summing up: if the stated condition near the ideal vertices of $P$ is verified (facets with the same colours have opposite signs), we can forget about $C^*$ and we need only to determine the ascending and descending links of all the vertices of $C$ inside $C$. This can then be done using Remark \ref{algorithm:rem}, as in the compact case.

\section{The manifolds} \label{manifolds:section} 
We define here the manifolds $W, X, Z$, the orbifold $Y$, and some perfect circle-valued Morse functions on them. 

Many of the calculations exposed here have been carried out with a program written in Sage that is publicly available from \cite{codeM}. The program takes as an input the adjacency matrix of a right-angled polytope $P$, a colouring of its facets, and a state that assigns a value $\pm 1$ at each facet. It determines the Betti numbers and the cusps of the manifold $M$ obtained from $P$ and its colours, and verifies whether the state fulfills the hypothesis of Theorem \ref{ad:teo}. It also constructs a triangulation of the (singular) fibers, in a format that can be read by SnapPy \cite{Sna} and Regina \cite{BBP}.

\subsection{The manifold $W$} \label{W:subsection}
We now define $W$ and prove Theorem \ref{W:teo}. As already mentioned, there is a sequence of remarkable right-angled polytopes $P_3,\ldots, P_8$ already considered by various authors \cite{ALR, PV, ERT}. 

We are interested here in $P_4$. The polytope is clearly presented in \cite{RT4}. It has 10 facets, 5 real vertices, and 5 ideal vertices. Every facet is isometric to the polyhedron $P_3$, a right-angled bipyramid with two real vertices and three ideal ones. Its orbifold Euler characteristic is $\chi(P_4) = \frac 1{16}$.

\begin{figure}
 \begin{center}
  \includegraphics[width = 10 cm]{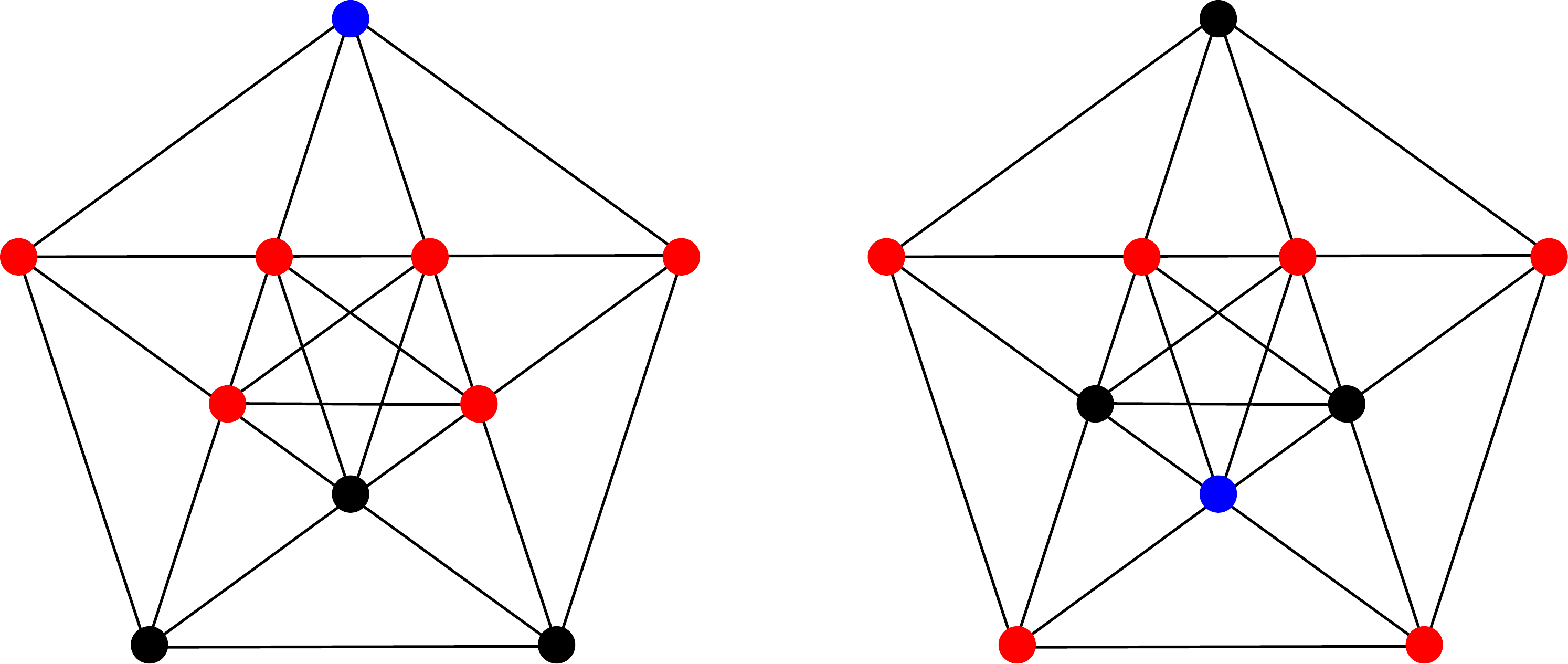}
 \end{center}
 \nota{The adjacency graph of the 10 facets of $P_4$. Some edges are superposed for simplicity, so some caution is needed here: to clarify this ambiguity, we have chosen a blue vertex and painted in red the 6 vertices adjacent to it and in black those that are not adjacent, in two cases (all the other cases are obtained by rotation).}
 \label{P4:fig}
\end{figure}

\begin{figure}
 \begin{center}
  \includegraphics[width = 5 cm]{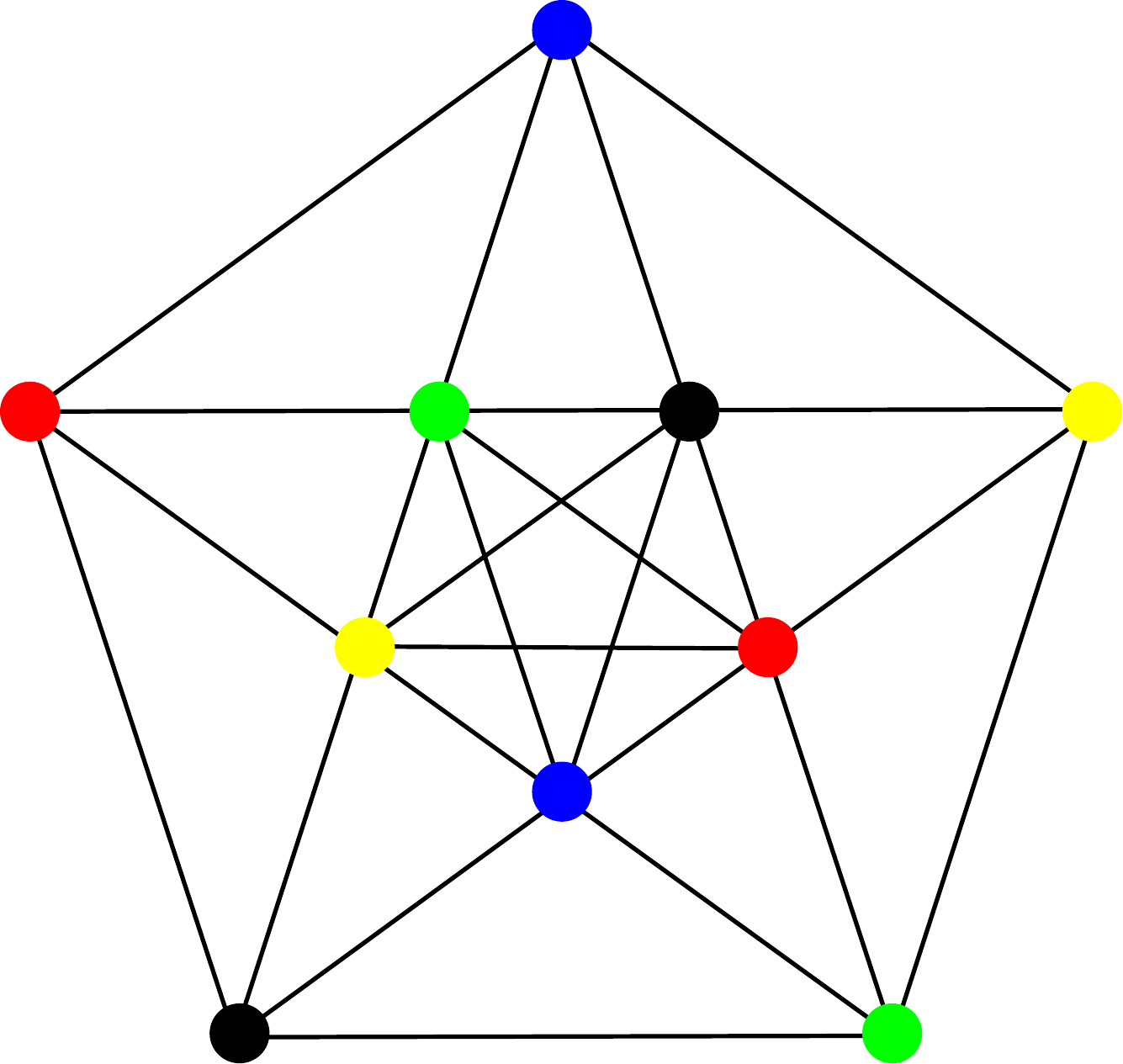}
 \end{center}
 \nota{A 5-colouring for $P_4$.}
 \label{P4_coloured:fig}
\end{figure}

The adjacency graph of its facets is shown in Figure \ref{P4:fig}. We assign to $P_4$ the 5-colouring shown in Figure \ref{P4_coloured:fig}. The colouring construction described in 
Section \ref{colours:subsection} produces a hyperbolic cusped orientable 4-manifold $W$, tessellated into $2^5$ copies of $P_4$ and hence with $\chi(W) = 2$. 

At every ideal vertex $v$ of $P_4$ one can verify that the Euclidean cube cross-section inherits from $P_4$ a colouring with all the 5 colours involved, one of which appears twice in opposite faces of the cube. Such a colouring yields a flat 3-torus that decomposes into $2^5$ cubes. The hyperbolic manifold $W$ has 5 cusps, one for each ideal vertex of $P_4$. Each cusp has a 3-torus section.

Using Sage (or by hand) we find that $H^1(W; \matR) = \matR^5$, and the cohomology class $x\in \matR^5$ is described by the balanced real state $s$ that assigns $x_i/2$ and $-x_i/2$ to the two facets that share the $i$-th colour, as explained in Section \ref{states:subsection}. The factor $1/2$ is there only to ensure that the lattice $H^1(W; \matZ)$ corresponds to $\matZ^5$. This state $s$ defines a diagonal map $f_s$ on the universal cover of $W$, as explained in Section \ref{diagonal:subsection}.

\begin{figure}
 \begin{center}
  \includegraphics[width = 12.5 cm]{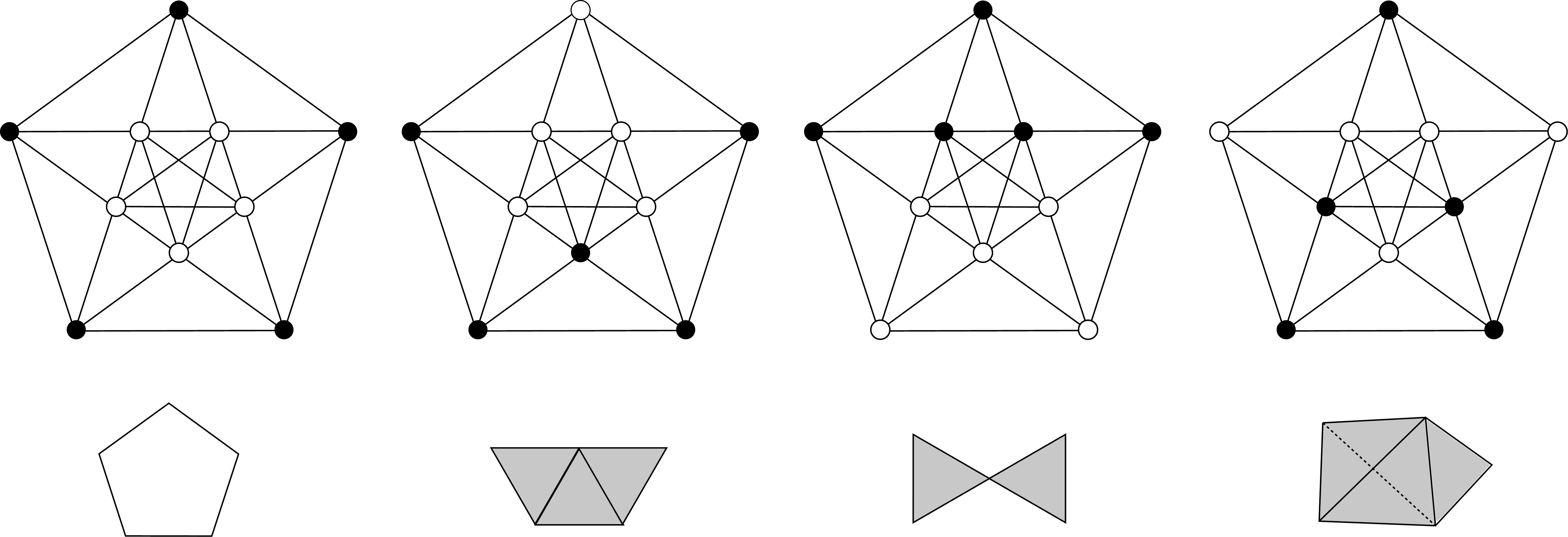}
 \end{center}
 \nota{The ascending and descending links for $P_4$, up to isomorphisms. A black (white) vertex has status I (O). The descending link, generated by the black vertices, is shown below. In the first case we get a circle, while in the other cases we always get a contractible complex made of 3 triangles, 2 triangles, and 1 tetrahedron and 1 triangle respectively. The ascending links are the same (ordered differently).}
 \label{P4_states:fig}
\end{figure}

\begin{prop}
If $x_i \neq 0$ for all $i$, then $f_s$ can be smoothened to a perfect circle-valued Morse function with two index-2 singular points.
\end{prop}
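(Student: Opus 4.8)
The plan is to check the hypotheses of Theorem~\ref{ad:teo} for the diagonal map $f_s$, using Proposition~\ref{ideal:prop} to dispose of the cusps, and then to read off the number of critical points from $\chi(W)$. Since $x_i\neq 0$ for every $i$, the balanced state $s$ — which assigns $x_i/2$ and $-x_i/2$ to the two facets of colour $i$ — is nowhere vanishing, so the orientation induced by $s$ and the diagonal map $f_s$ of Section~\ref{diagonal:subsection} are defined. First I would verify the hypothesis of Proposition~\ref{ideal:prop} at the five ideal vertices of $P_4$: as recalled above, each ideal vertex has a Euclidean cube cross-section whose induced colouring uses all five colours, one of which appears on a pair of opposite faces; these two facets carry the values $x_i/2$ and $-x_i/2$, which have opposite signs. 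Thus Proposition~\ref{ideal:prop} applies, so the hypothesis of Theorem~\ref{ad:teo} holds automatically at every boundary vertex of $C^*$, and at every interior vertex it suffices to compute the ascending and descending links inside $C$.

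The core of the argument is the combinatorial computation of these links at the $2^5=32$ vertices of $C$, carried out with the flipping algorithm of Remark~\ref{algorithm:rem}: at $v=0$ the ten vertices of $\lk(v)$ (dual to the ten facets of $P_4$) get status O or I according to the sign of $s$ on the corresponding facet, and passing from $v$ to $v+e_i$ switches exactly the statuses of the two facets coloured $i$ — so at every vertex each colour contributes one O and one I. Exploiting the symmetries of the colouring of Figure~\ref{P4_coloured:fig} to cut down the number of cases, one checks that — whatever the signs of the $x_i$ — at every vertex both $\lk_\downarrow(v)$ and $\lk_\uparrow(v)$ are isomorphic to one of the four complexes of Figure~\ref{P4_states:fig}: a circle, or one of three complexes assembled from three triangles, from two triangles, and from a tetrahedron together with a triangle. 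The circle is already a connected $1$-dimensional polyhedron, and each of the other three visibly collapses to a point; so the hypothesis of Theorem~\ref{ad:teo} holds at every vertex, and by that theorem (and the remark following it, $[s]$ being integral) $f_s$ smoothens to a circle-valued Morse function $f_s\colon W\to S^1$ all of whose critical points have index $2$. The count is then forced: for such a function $\chi(W)=\sum_i(-1)^i c_i=c_2$, and $\chi(W)=2$, so $f_s$ has exactly two index-$2$ critical points. (In fact only two of the $32$ vertices carry a circle link — they form a complementary pair $\{v,\bar v\}$ with $\bar v=v+(1,\dots,1)$, since the ascending link of $v$ equals the descending link of $\bar v$ — while the remaining $30$ are contractible.)

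The main obstacle is precisely this vertex-by-vertex verification: one must make sure that at none of the $32$ vertices is the descending or ascending link disconnected, or a $2$-dimensional complex that does not collapse onto a graph, since either failure would obstruct smoothing to an index-$2$-only Morse function. This is exactly where the particular, highly symmetric $5$-colouring of Figure~\ref{P4_coloured:fig} is used: its symmetry group reduces the $32$ cases to the four link-types of Figure~\ref{P4_states:fig}, which are then inspected by hand — and confirmed by the program of \cite{codeM}.
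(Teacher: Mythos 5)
Your argument follows the paper's proof of this proposition essentially verbatim: verify the hypothesis of Proposition~\ref{ideal:prop} at each ideal vertex of $P_4$ so that $C$ may be used in place of $C^*$, run the flipping algorithm of Remark~\ref{algorithm:rem} over the $32$ vertices of $C$ to obtain the four link types of Figure~\ref{P4_states:fig}, check that each collapses to a point or a circle, apply Theorem~\ref{ad:teo} together with the integrality remark, and deduce the count of two critical points from $\chi(W)=2$. This is correct and is the same route the paper takes.

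One inaccuracy in the closing parenthetical. The $2$-handles of the circular handle decomposition come precisely from the vertices whose \emph{descending} link collapses to a circle, so $\chi(W)=2$ forces \emph{two} such vertices, say $v_1,v_2$. Since $\lk_\uparrow(v)=\lk_\downarrow(v+(1,\ldots,1))$ and, according to the caption of Figure~\ref{P4_states:fig} (``the ascending links are the same, ordered differently''), no single vertex type has both its ascending and its descending link circular, the complementary partners $\bar v_1,\bar v_2$ are distinct from $v_1,v_2$ and carry circle \emph{ascending} links. Hence four vertices (two complementary pairs) carry a circle link, with the remaining $28$ having both links contractible. If, as you wrote, only one pair $\{v,\bar v\}$ carried circle links, only one vertex would have a circular descending link and the decomposition would have a single $2$-handle, contradicting $\chi(W)=2$ — and your own count of two critical points.
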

\begin{proof}
We show that the hypothesis of Theorem \ref{ad:teo} are fulfilled. The criterion of Proposition \ref{ideal:prop} is verified: every ideal vertex is adjacent to two facets with the same colour, and $s$ has opposite values on these by assumption. Hence we can use $C$ instead of $C^*$.

To identify the ascending and descending links at the vertices $v\in \matZ_2^5$ of $C$ we follow Remark \ref{algorithm:rem}. We get the simplicial complexes shown in Figure \ref{P4_states:fig}. They all collapse either to a point or to a circle.

There are two index-2 singular points: we find this either by inspection, or by recalling that $\chi(W)=2$.
\end{proof}

This completes the proof of the constructive part of Theorem \ref{W:teo}. To conclude we need to show that if $x_i=0$ then there exists no perfect circle-valued Morse function. The reason is quite simple: if $f\colon W \to S^1$ represents a cohomology class with $x_i=0$, we can prove that its restriction to the $i$-th cusp of $W$ is homotopically trivial. Therefore $f$ cannot be a fibration there, and it is not homotopic to any circle-valued Morse function of any kind.

\subsubsection{The singular and regular fibers} \label{sr:subsubsection}
We have proved the remarkable fact that each nowhere-vanishing $x\in H^1(W,\matZ) = \matZ^5$ is represented by a circle-valued Morse function $f\colon W \to S^1$ with two critical points of index 2. We now would like to understand these functions $f$ topologically. 
The isometries of $W$ act by switching the signs of each coordinate $x_i$ arbitrarily, so we can always suppose that $x$ lies in the first orthant $x_i>0$. Using Sage we have studied the case $x=(1,1,1,1,1)$. We now expose the topological information that we have found in this case.

The function $f\colon W \to \matR/\matZ = S^1$ has two critical points, with values $0$ and $1/2$. We have two singular fibers $f^{-1}(0)$ and $f^{-1}(1/2)$ and two regular fibers $f^{-1}(1/4)$ and $f^{-1}(3/4)$. Each singular fiber is a 3-manifold with 6 boundary tori $W^{\rm sing}$, with one boundary torus coned to a point. The two nearby regular fibers $W^{\rm reg}$ have 5 boundary tori (one inside each cusp of $W$) and are obtained by substituting this cone point with a circle in two different ways: the two regular fibers are obtained one from the other via integral Dehn surgery along a knot.

We have used Sage to identify all these fibers. Both singular fibers are the same hyperbolic manifold $W^{\rm sing}$ with 6 cusps (one of which is coned) and volume $\sim$ 65.318656269. It has a geometric triangulation with 72 tetrahedra. Both the regular fibers are the same hyperbolic manifold $W^{\rm reg}$ with 5 cusps and volume $\sim 54.991958042$. It has a geometric triangulation with 70 tetrahedra. The fact that we get twice the same manifolds is probably due to the symmetries of $W$.

Using SnapPy \cite{Sna} we can see that indeed $W^{\rm reg}$ can be obtained from $W^{\rm sing}$ by filling one cusp along two different slopes at distance 1 -- this phenomenon is called \emph{cosmetic surgery} \cite{BHW}. There is an isometry $\psi$ of $W^{\rm sing}$ that sends the first slope to the second, and this explains why we get the same filled manifold. 

SnapPy also tells us a fact that seems relevant to us: the filled $W^{\rm reg}$ has an additional isometry $\varphi$ that is \emph{not} induced from $W^{\rm sing}$, and that acts homologically trivially on the cusps.

\begin{rem}[Ping pong] \label{ping:pong:rem}
Although we did not prove it (the combinatorics involved is non-trivial), we believe that the map $f$ should have the following dynamical behaviour, that may be described as a \emph{ping-pong} between the isometries $\psi$ and $\varphi$. We start with the regular fiber $W^{\rm reg}$ at some time $t$. We increase $t$ and when we cross a critical value the fiber is surgered along a knot, whose complement is $W^{\rm sing}$, to get a new manifold that is in fact diffeomorphic to the original $W^{\rm reg}$ via $\psi$. Now we act on $W^{\rm reg}$ via $\varphi$ and repeat the process from the beginning.

The role of $\varphi$ should be fundamental to ``mix everything up'' like in the familiar pseudo-Anosov monodromies: since $\varphi$ is an additional isometry, not induced from one on $W^{\rm sing}$, we are not just surgerying all the time along the same knot -- that picture would be too simple and could not hold because it would produce some non-peripheral $\matZ \times \matZ$ inside $\pi_1(W)$, like with the reducible mapping classes in dimension $2+1=3$. In some vague sense, the two isometries $\psi$ and $\varphi$ look like the matrices $\matr 1101$ and $\matr 1011$, that when multiplied give rise to $\matr 2111$, the Anosov monodromy of the famous fibration of the figure-8 knot complement, fully guaranteed not to fix any non-peripheral closed curve on the punctured torus.
\end{rem}

\subsubsection{The abelian cover: strong numerical evidence for its rigidity}\label{abeliancover:subsection}
We keep studying the case $x=(1,1,1,1,1)$.
Consider the abelian cover $\widetilde W$ induced by $\ker f$. It is a geometrically infinite manifold with limit set $S^3$, obtained topologically from $W^{\rm reg} \times [0,1]$ by attaching infinitely many 2-handles on both sides. As noted in the introduction, the fundamental group of $\widetilde W$ is finitely generated but not finitely presented, since $b_2(\widetilde W) = \infty$.

It is natural to ask whether the hyperbolic structure on the geometrically infinite $\widetilde W$ is rigid. One usually start by investigating whether the holonomy representation is infinitesimally rigid. Although this is a linear problem, it is not straightforward because $\dim \Iso(\matH^4)=10$ and $\pi_1(\widetilde W)$ has many generators and infinitely many relators. We have elaborated a general strategy that applies to abelian covers constructed from right-angled polytopes, colours, and states, and written a separate computer program. We found very strong numerical evidence for the infinitesimal rigidity of the holonomy of $\widetilde W$ and of any other similar abelian cover investigated in this paper. This will be explained in a forthcoming paper \cite{Bat}. We proved the infinitesimal rigidity rigorously in one case in Section \ref{very:symmetric:subsubsection} below.

The strong numerical evidence for the infinitesimal rigidity of $\widetilde W$ and other abelian covers is in contrast to dimension three, where as a consequence of the (now proved \cite{BrBr, B, NS}) density conjecture we know that every geometrically infinite 3-manifold can be deformed into a geometrically finite one. This contrast is however not surprising: it is natural to experience more rigidity in dimension $n\geq 4$, especially for a manifold that has infinitely many 2-handles.  Nevertheless, these seem to be the first examples of rigid geometrically infinite hyperbolic manifolds in any dimension. See \cite{KS} for a related rigidity result (that however does not apply in this case).



\subsection{The manifold $X$} \label{X:subsection}
We now construct one more cusped example, via another right-angled polytope, the ideal 24-cell $\calC \subset \matH^4$. This regular polytope has 24 facets, each being a right-angled ideal regular octahedron. It has a unique 3-colouring, shown in Figure \ref{colori_numeri:fig}. This produces a very symmetric cusped hyperbolic 4-manifold $X$ that was already considered in \cite{KM, MR}. We have $\chi(\calC) = 1$ and $\chi(X) = 8$.

\begin{figure}
 \begin{center}
  \includegraphics[width = 11 cm]{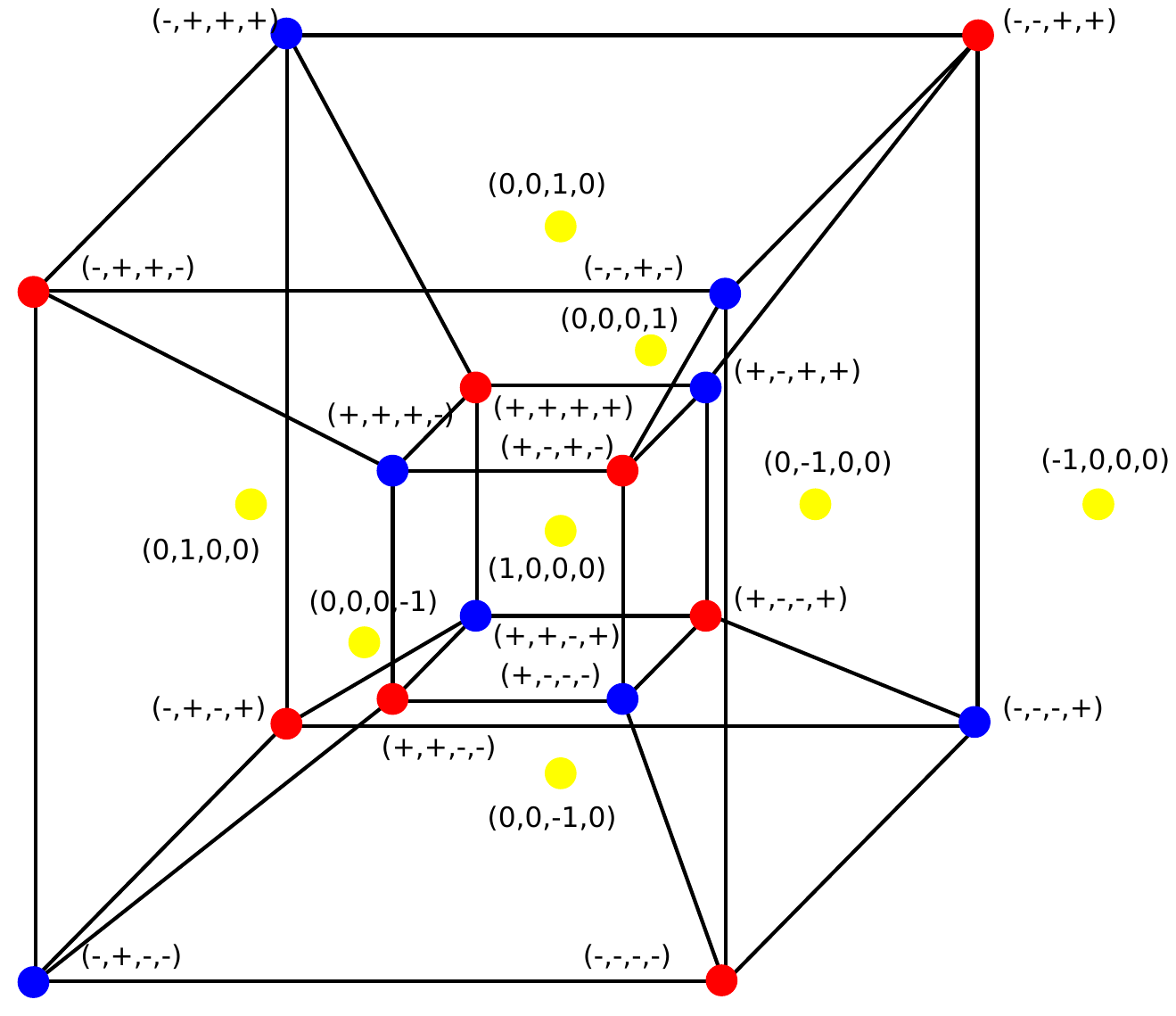}
 \end{center}
 \nota{The picture shows the dual $\calC^*$ of the 24-cell $\calC$, which is again a 24-cell. Not all the edges are drawn, for simplicity: every yellow vertex is the center of a cube and we should add 8 more edges connecting it with the vertices of the cube.
 The vertices of $\calC^*$ are 3-coloured. The picture is taken from similar figures in \cite{JNW}.}
 \label{colori_numeri:fig}
\end{figure}

At every ideal vertex of $\calC$ we have a Euclidean cube coloured with three colours, that gives rise to a 3-torus cusp section. The manifold $X$ has 24 such cusps, one for each ideal vertex of $\calC$.

\begin{rem}[Quaternions]
The colouring of $\calC$ can be described in the following way. Consider $\calC$ inside the quaternions space $\matH$. The dual of $\calC$ is another 24-cell $\calC^*\subset \matH$, whose vertices form the binary tetrahedral group $T_{24}^*$. Now $Q_8= \{\pm 1, \pm i, \pm j, \pm k\}$ is a normal subgroup $Q_8 \triangleleft T_{24}^*$ of index 3. The three lateral classes of $Q_8$ subdivide the 24 vertices of $\calC^*$ into three octets, and we assign a colour to each octet. 
\end{rem}

The existence of various states for $\calC$ that give rise to connected ascending and descending links was first proved in \cite{JNW}. The original goal of our work was to better understand the topology of the resulting algebraically fibering maps. As in \cite{JNW}, we let a \emph{state} be a real state $s$ that assigns the number $\pm 1$ to each facet of $\calC$. (Actually, one should take $\pm 1/2$ to get a primitive integral class with our convention, but we ignore this point.)

We have written a Sage program that classifies all the states $s$ for which the hypothesis of Theorem \ref{ad:teo} are satisfied, considered up to symmetries of $\calC$ and up to switching all the signs on facets with the same colours (both these operations produce the same maps $f_s$ up to an isometry of $X$). Among them, we find 63 states that are particularly interesting because the ascending and descending links all collapse to circles: they form a Hopf link at each vertex of the dual cubulation $C$. For such states $s$, Theorem \ref{ad:teo} furnishes a perfect circle-valued Morse function $f_s\colon M \to S^1$ that has precisely one critical point inside each 24-cell of the tessellation. This is coherent, since the total number of critical points is equal to $\chi(M)=8$, which is in turn equal to the number of 24-cells in the tessellation, since $\chi(\calC)=1$.

These 63 states $s$ give rise to potentially topologically different types of perfect circle-valued Morse functions $f_s$ on the same hyperbolic manifold $X$. By construction, in all the cases the function $f_s$ has two singular values $0$ and $1/2$ in $\matR/_\matZ=S^1$, whose counterimages consist of 4 singular points each, like in Figure \ref{functions:fig}. There are two types of regular fibers $f^{-1}(1/4)$ and $f^{-1}(3/4)$ and two types of singular fibers $f^{-1}(0)$ and $f^{-1}(1/2)$. Each regular fiber $X^{\rm reg}$ is a 3-manifold with 24 torus boundary components, one inside each cusp of $X$. Each singular fiber is a 3-manifold $X^{\rm sing}$ with 28 toric boundary components, four of which have been coned. The two singular fibers are actually homeomorphic, because they are related by the isometry of $X$ that sends $\calC^{(v_1,v_2,v_3)}$ to $\calC^{(1-v_1,1-v_2,1-v_3)}$ identically.

\begin{figure}
 \begin{center}
  \labellist
\small\hair 2pt
\pinlabel $f$ at 110 40
\pinlabel $X$ at 10 220
\pinlabel $X^{\rm reg}$ at 15 130
\pinlabel $X^{\rm sing}$ at 78 82
\pinlabel $X^{\rm reg}$ at 125 200
\pinlabel $X^{\rm sing}$ at 147 65
\pinlabel $X^{\rm reg}$ at 185 130
\pinlabel $0$ at 75 20 
\pinlabel $1/2$ at 140 20 
\pinlabel $\tilde f$ at 417 40
\pinlabel $\widetilde X$ at 310 220
\pinlabel $X^{\rm reg}$ at 290 193
\pinlabel $X^{\rm sing}$ at 260 65
\pinlabel $X^{\rm reg}$ at 373 200
\pinlabel $X^{\rm sing}$ at 328 82
\pinlabel $X^{\rm reg}$ at 420 193
\pinlabel $X^{\rm sing}$ at 395 65
\pinlabel $X^{\rm reg}$ at 503 200
\pinlabel $X^{\rm sing}$ at 458 82
\pinlabel $X^{\rm reg}$ at 550 193
\pinlabel $X^{\rm sing}$ at 525 65
\pinlabel $-1/2$ at 260 20 
\pinlabel $0$ at 330 20 
\pinlabel $1/2$ at 395 20 
\pinlabel $1$ at 460 20 
\pinlabel $3/2$ at 525 20 
\endlabellist
  \includegraphics[width = 12.5 cm]{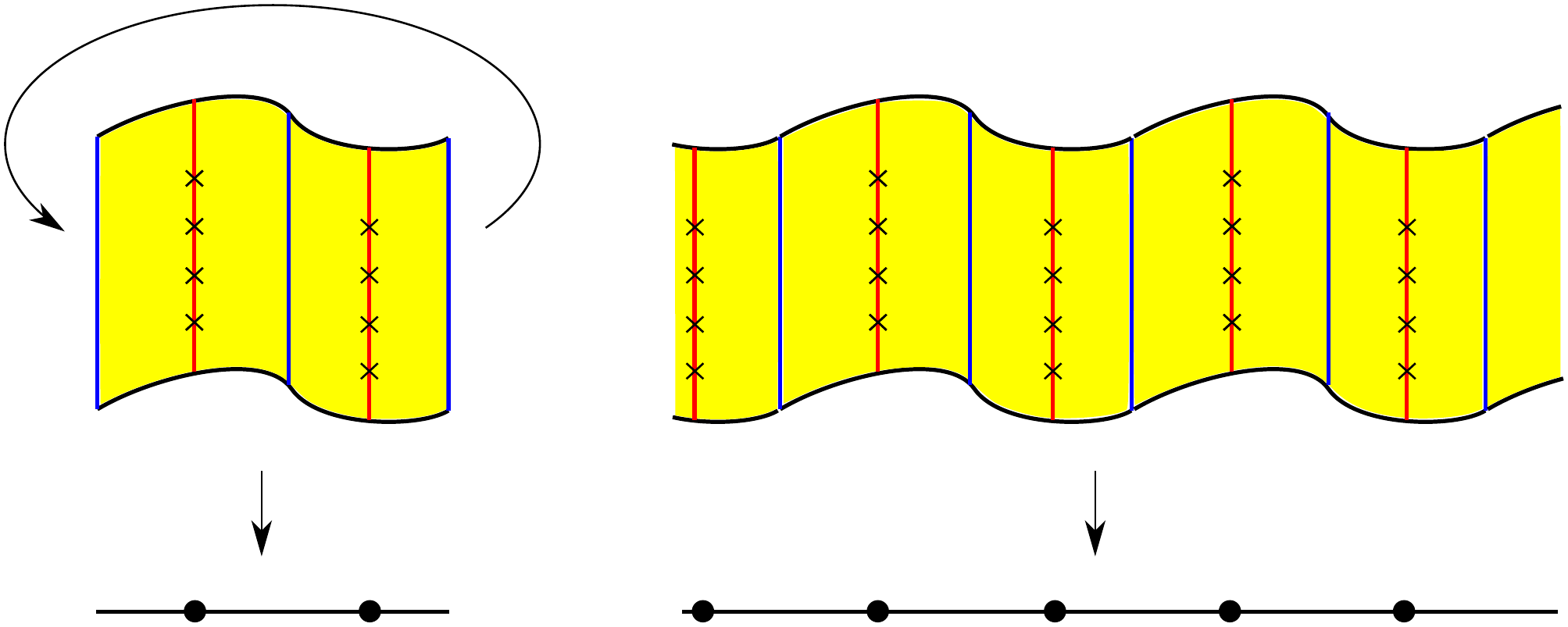}
 \end{center}
 \nota{The circle-valued Morse function $f\colon X \to S^1$ (left) and its lift $\tilde f \colon \widetilde X \to \matR$ (right). There are two critical values 0 and $1/2$ for $f$, each containing 4 critical points in its counterimage, indicated with an X.}
 \label{functions:fig}
\end{figure}

We have determined $X^{\rm sing}$ for each state $s$ using a Sage code \cite{codeM} that uses both Regina \cite{BBP} and SnapPy \cite{Sna}. The manifold $X^{\rm sing}$ has 28 cusps and its first homology is $\matZ^{28}$ in all cases. The manifolds are listed in Tables \ref{fibers:table} and \ref{fibers2:table}. They are all hyperbolic and, quite remarkably, they can all be distinguished by their volumes. Without the great help of hyperbolic geometry it would have been very difficult to distinguish these 3-manifolds. Each manifold has an Epstein -- Penner canonical decomposition that is further cut into geometric hyperbolic ideal tetrahedra, whose number is shown in the last column. The data in the tables should be interpreted as numerical results and not as rigorous proofs: in case of need, one may wish to use the SnapPy verified computations to try to upgrade this discussion to a rigorous argument, but given the numbers of manifolds and tetrahedra involved we have refrained from doing this.

Finally, each state $s$ gives rise to an abelian cover $\widetilde X_s$. Using the method and code mentioned in Subsection \ref{abeliancover:subsection} and described in \cite{Bat} we could get strong numerical evidence for the infinitesimal deformation of the holonomy representation of $\widetilde X_s$ in all the 63 cases.


\begin{table}
\begin{center}
\begin{tabular}{cccc}
N & Volume & Symmetry group & Tetrahedra  \\
\hline
1 & 194.868788430654 &   nonabelian group of order 192 & 192 \\ 
2 & 189.47275083534 &   D4 & 192 \\ 
3 & 186.874353850537 &   Z/2 + D4 & 192 \\ 
4 & 186.340011593947 &   Z/2 + Z/2 & 192 \\ 
5 & 185.307321263554 &   Z/2 & 192 \\ 
6 & 185.035244912975 &   D4 & 200 \\ 
7 & 184.813075164402 &   Z/2 + D4 & 192 \\ 
8 & 184.301433361768 &   nonabelian group of order 64 & 192 \\ 
9 & 184.066725592654 &   Z/2 + Z/2 & 196 \\ 
10 & 183.87312700617 &   Z/2 & 194 \\ 
11 & 183.866948358501 &   Z/2 & 193 \\ 
12 & 183.544395602474 &   D4 & 192 \\ 
13 & 183.436816803216 &   Z/2 + Z/2 + Z/2 & 192 \\ 
14 & 183.392747517971 &   Z/2 & 194 \\ 
15 & 183.121509768255 &   Z/2 + Z/2 & 196 \\ 
16 & 182.360395141194 &   Z/2 & 192 \\ 
17 & 182.280935940832 &   Z/2 + Z/2 & 192 \\ 
18 & 182.171456556108 &   0 & 203 \\ 
19 & 181.283359592032 &   D4 & 192 \\ 
20 & 181.127484303344 &   Z/2 & 196 \\ 
21 & 181.024645445741 &   Z/2 & 196 \\ 
22 & 180.934130108819 &   Z/2 + Z/2 & 212 \\ 
23 & 180.824987315671 &   D4 & 200 \\ 
24 & 180.660993296517 &   Z/2 & 194 \\ 
25 & 180.450694474362 &   D4 & 200 \\ 
26 & 180.387179283461 &   Z/2 & 201 \\ 
27 & 180.33108563585 &   Z/2 & 190 \\ 
28 & 180.248483292705 &   Z/2 + Z/2 & 196 \\ 
29 & 180.127608138661 &   Z/2 + Z/2 & 198 \\ 
30 & 179.869062465521 &   Z/4 & 200 \\ 
31 & 179.754141009737 &   0 & 197 \\ 
32 & 179.656944120778 &   Z/2 & 196 \\ 
33 & 179.181472240992 &   Z/2 & 194 \\ 
34 & 178.902836506372 &   Z/2 + D4 & 196 \\ 
35 & 178.795804830626 &   Z/2 + D4 & 192 \\ 
36 & 178.709806437094 &   0 & 192 \\ 
37 & 178.550276957958 &   Z/2 + Z/2 & 202 
\end{tabular}
\vspace{.2 cm}
\nota{The singular fibers that we obtained for $X$ by assigning different states to $\calC$. All these hyperbolic 3-manifolds have 28 cusps and $H_1=\matZ^{28}$. The last column shows the number of tetrahedra in the Epstein-Penner canonical decomposition (possibly after some subdivision).}
\label{fibers:table}
\end{center}
\end{table}

\begin{table}
\begin{center}
\begin{tabular}{cccc}
N & Volume & Symmetry group & Tetrahedra \\
\hline
38 & 178.49844092298 &   Z/2 + Z/2 + Z/2 & 196 \\ 
39 & 178.355491610596 &   nonabelian group of order 64 & 192 \\ 
40 & 178.321815160374 &   D4 & 204 \\ 
41 & 177.898810518335 &   D3 & 201 \\ 
42 & 177.794415210729 &   0 & 194 \\ 
43 & 177.551934214513 &   Z/2 + D4 & 188 \\ 
44 & 177.362796117873 &   Z/2 & 198 \\ 
45 & 177.250459763042 &   nonabelian group of order 32 & 200 \\ 
46 & 177.110609049916 &   Z/2 + Z/2 & 196 \\ 
47 & 176.982387058175 &   Z/2 + Z/2 & 198 \\ 
48 & 176.898729129159 &   0 & 198 \\ 
49 & 175.421613694494 &   0 & 198 \\ 
50 & 175.170011870291 &   Z/2 + Z/2 & 192 \\ 
51 & 175.085280565074 &   Z/2 + Z/2 & 188 \\ 
52 & 174.081891361605 &   D4 & 200 \\ 
53 & 173.80796075894 &   Z/2 & 194 \\ 
54 & 173.331415822106 &   Z/2 + Z/2 + Z/4 & 192 \\ 
55 & 173.211174252127 &   D4 & 200 \\ 
56 & 172.692650600288 &   Z/2 & 192 \\ 
57 & 172.581866456537 &   D4 & 184 \\ 
58 & 172.160507136111 &   nonabelian group of order 32 & 192 \\ 
59 & 171.484300906217 &   Z/2 + D4 & 184 \\ 
60 & 170.918046348271 &   Z/2 + Z/4 & 192 \\ 
61 & 166.465531880726 &   Z/2 + Z/2 & 186 \\ 
62 & 163.949780648344 &   Z/4 & 184 \\ 
63 & 154.990534348097 &   Z/2 + Z/2 + D8 & 176  
\end{tabular}
\vspace{.2 cm}
\nota{The singular fibers that we obtained for $X$ by assigning different states to $\calC$. All these hyperbolic 3-manifolds have 28 cusps and $H_1=\matZ^{28}$. The last column shows the number of tetrahedra in the Epstein-Penner canonical decomposition (possibly after some subdivision). (continue).}
\label{fibers2:table}
\end{center}
\end{table}

\subsubsection{A very symmetric case} \label{very:symmetric:subsubsection}
The largest hyperbolic manifold in Table \ref{fibers:table} deserves more attention. By construction, every manifold $X^{\rm sing}$ among those listed in the tables has a topological triangulation with 192 tetrahedra, and hence volume $\leq 192 V$ where $V$ is the volume of the ideal regular tetrahedron (the canonical decomposition may contain less or more than 192 tetrahedra). The first manifold in Table \ref{fibers:table} decomposes precisely into 192 regular ideal tetrahedra, so it has the largest volume, and moreover also the largest symmetry group, yet of order 192. The manifold arises from a very symmetric state $s$ that we now describe.

\begin{figure}
 \begin{center}
  \includegraphics[width = 12.5 cm]{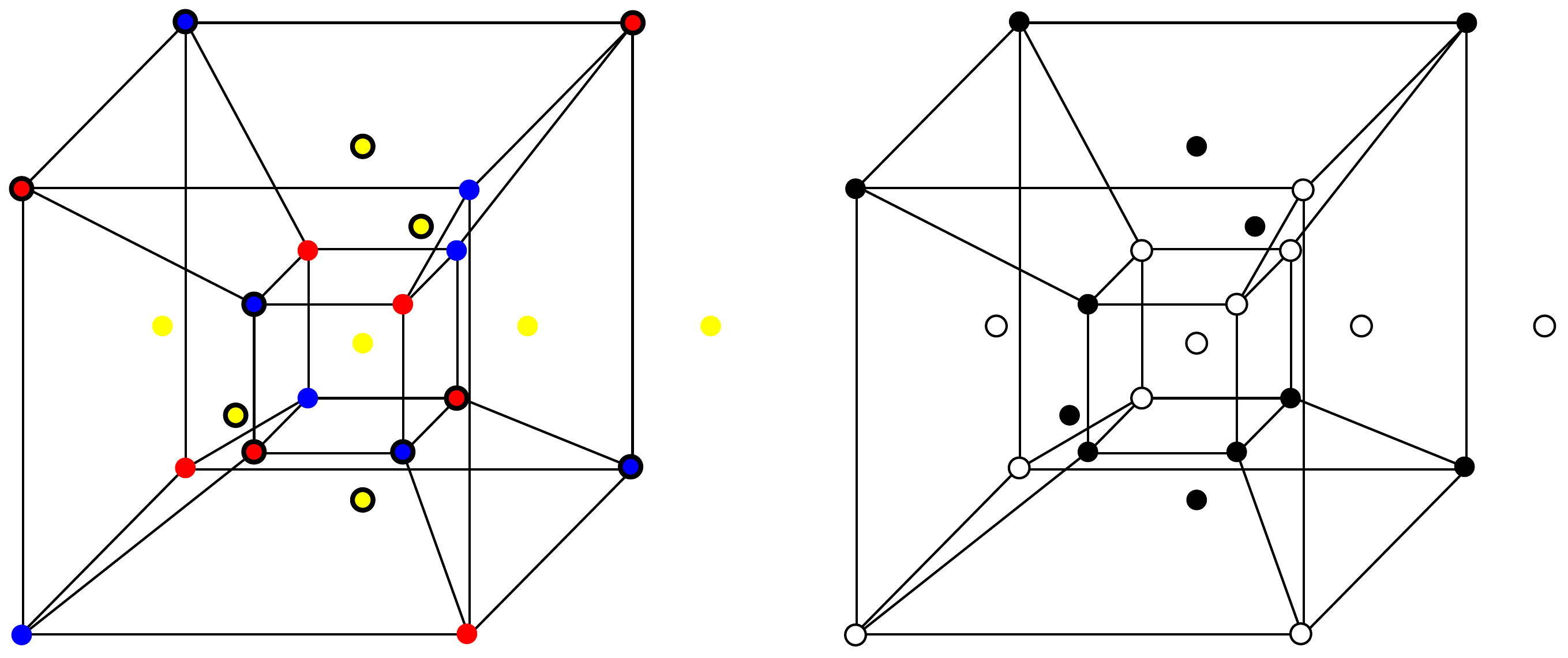}
 \end{center}
 \nota{The very symmetric state $s$. The black and white vertices of $\calC^*$ represent the status I and O respectively (right). In the left figure we show both the colouring and the state.}
 \label{colori:fig}
\end{figure}

This very symmetric state $s$ is shown in Figure \ref{colori:fig}. It has the following appealing algebraic description: each of the three lateral classes of $Q_8$ inside $T_{24}^*$ is preserved by the left multiplication by $i$, and the action decomposes it into two orbits of four vertices each. Assign the status $+1$ (also called O) to one orbit and $=-1$ (also called I) to the other. Up to symmetries, the resulting state $s$ is independent of the choice of the orbits; moreover, we would get the same $s$ (up to symmetries) also if we chose either left- or right-multiplication of any of $\pm i, \pm j, \pm k$.

\begin{figure}
 \begin{center}
  \includegraphics[width = 7 cm]{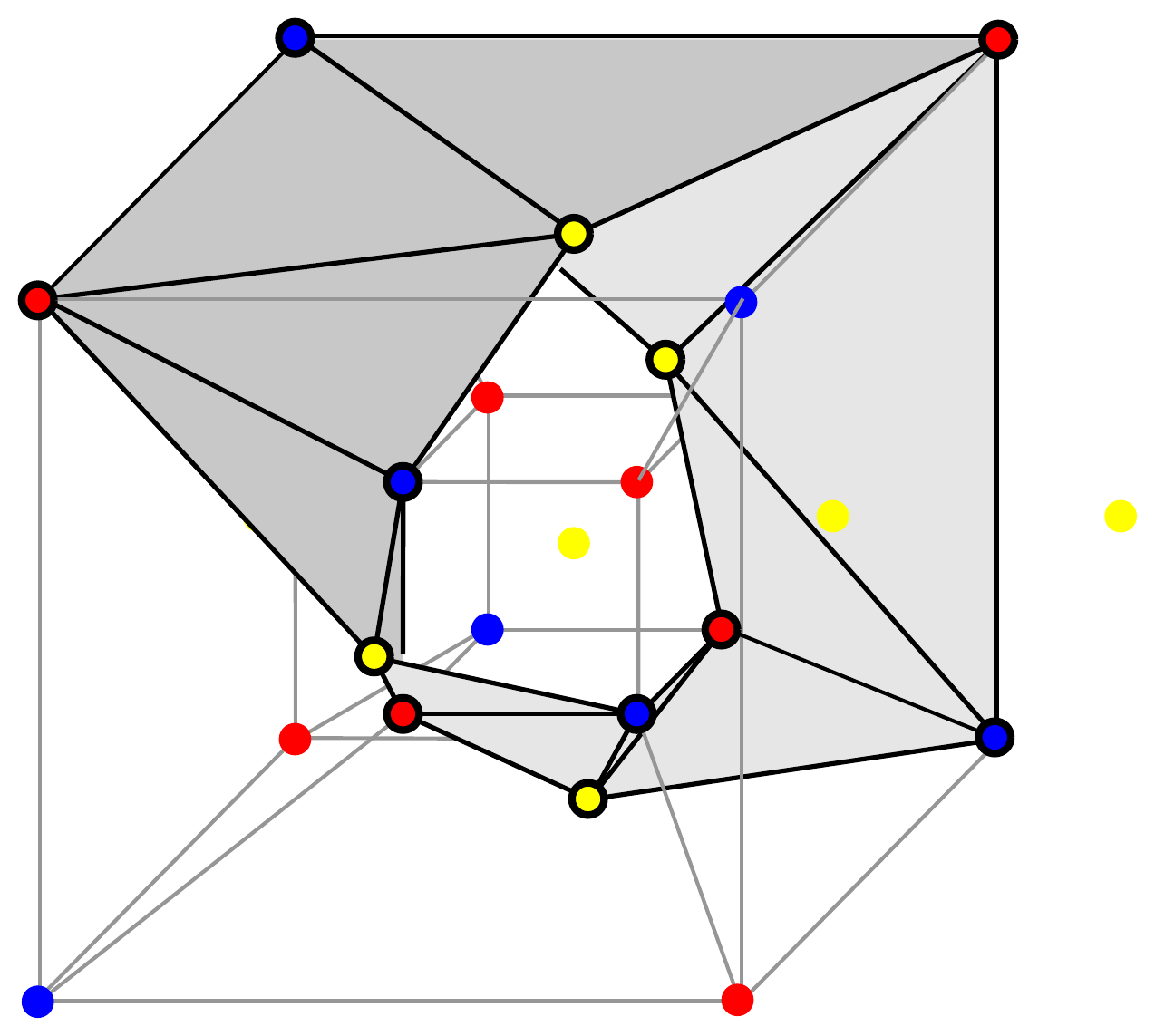}
 \end{center}
 \nota{The descending link is a band decomposed into 12 triangles. The ascending link is isomorphic to it, and altogether they form two bands that collapse onto a Hopf link in $S^3$.}
 \label{nastri:fig}
\end{figure}

In this case, if we apply the algorithm of Remark \ref{algorithm:rem} to detect the ascending and descending links at every vertex $v\in \matZ_2^3$, we find the same state $s$ (up to symmetries of $\calC$) at every interior vertex $v$. At every $v$ the ascending and descending links form altogether a pair of bands in $S^3$ that collapse to a Hopf link. One band is pictured in Figure \ref{nastri:fig}.

With some patience, we can prove by hand that in this very symmetric context the singular fiber $X^{\rm sing}$ decomposes into 192 ideal tetrahedra, and that every edge of the resulting triangulation is adjacent to precisely 6 of them. This implies immediately that $X^{\rm sing}$ has a hyperbolic structure obtained by assigning to each tetrahedron the structure of a regular ideal one.

\begin{figure}
 \begin{center}
  \makebox[\textwidth][c]{\includegraphics[width = 16 cm]{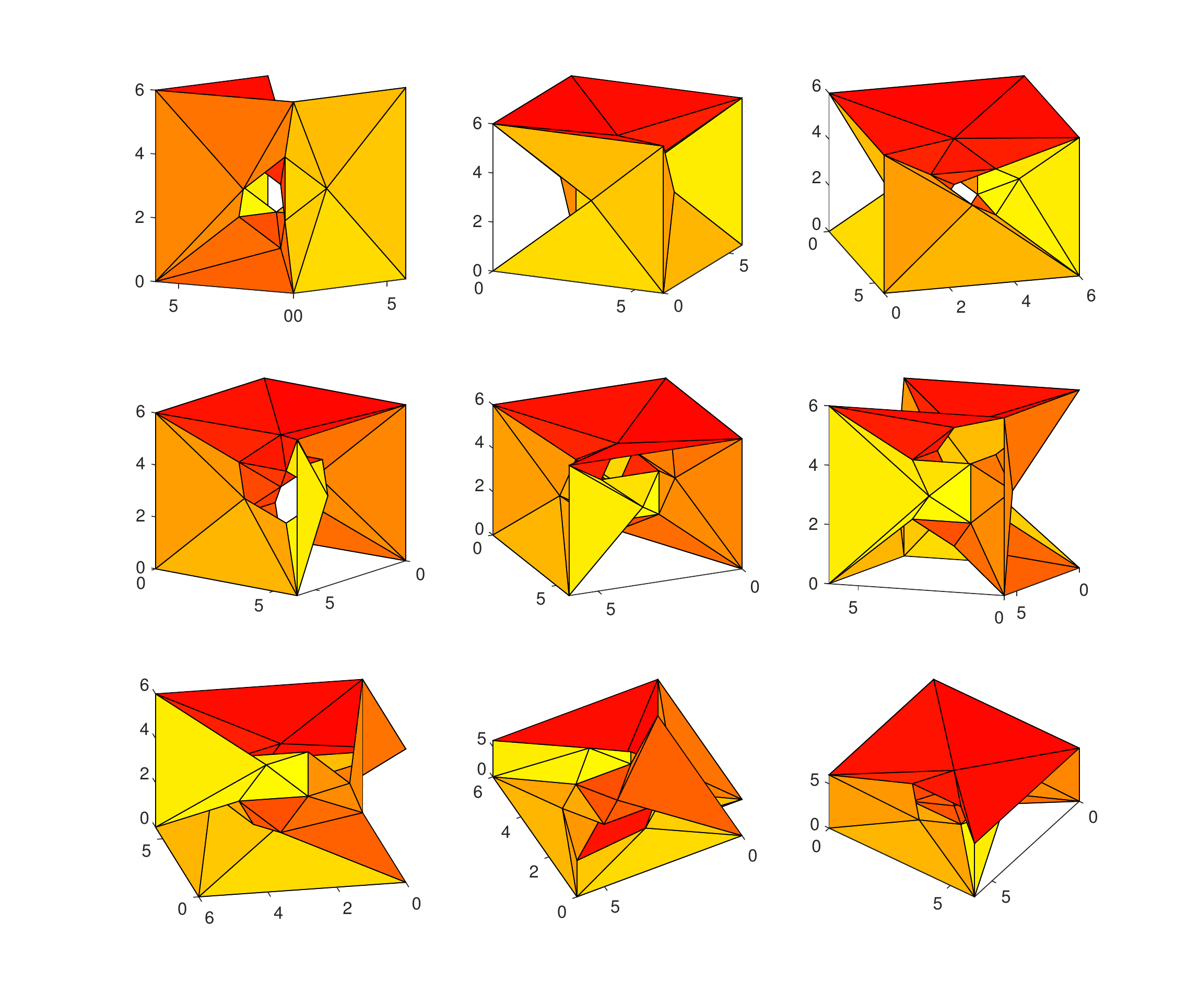}}
 \end{center}
 \nota{The triangles in the 2-skeleton of the 24-cell that separate facets with opposite I/O status form a torus $T$, shown here from nine different viewpoints. In the pictures, we omit for the sake of clarity six triangles that are outside of the central cube and have a vertex at infinity. We may see that every vertex is adjacent to six triangles.}
 \label{torus:fig}
\end{figure}

The ideal triangulation of $X^{\rm sing}$ can be constructed by taking, for each $v\in \matZ_2^3$, the cone with vertex the center of $\calC$ over all the ideal triangles in $\calC$ that separate two facets having opposite status. These ideal triangles form a torus (with 24 punctures) shown in Figure \ref{torus:fig}. One can check that every vertex of the triangulated torus is adjacent to exactly six triangles. 

In this very symmetric case we were able to calculate also the regular fiber $X^{\rm reg}$, that is a hyperbolic 3-manifold with 24 cusps and volume $\sim 152.510077$. It may be identified as the double cover of the (quite messy) orbifold $O$ shown in Figure \ref{orbifold:fig}. The orbifold $O$ is tessellated into 24 (non regular!) ideal octahedra, precisely as the boundary of the 24-cell, and $X^{\rm reg}$ is tessellated into 48 of them.

\begin{figure}
 \centering
  \makebox[\textwidth][c]{\includegraphics[width=16 cm, height= 3.5 cm]{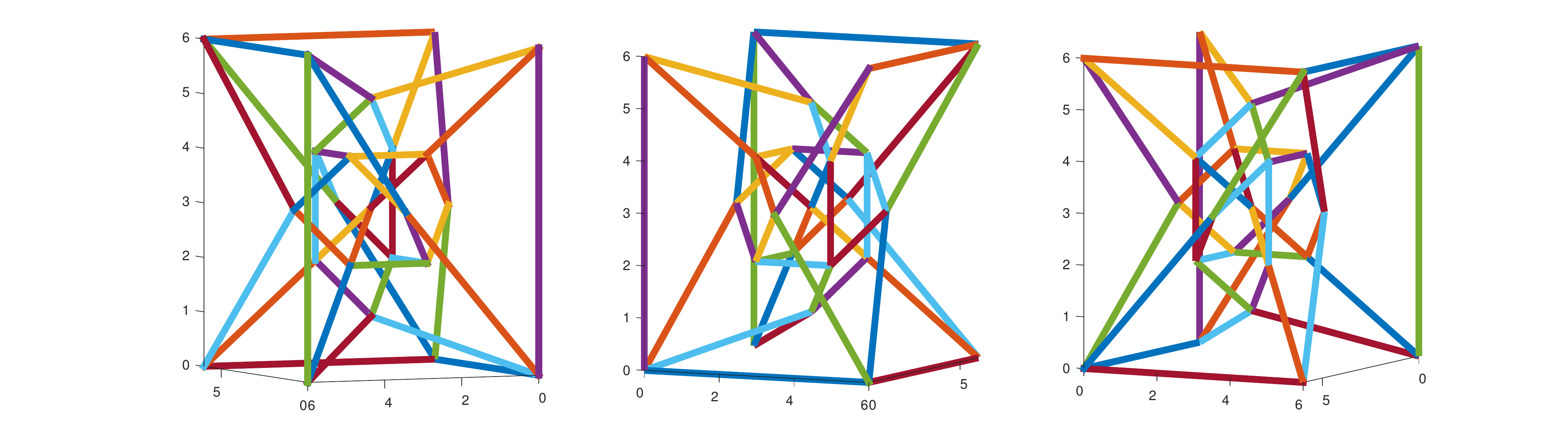}}
 \nota{The hyperbolic orbifold $O$ has base space $S^3$ (with 24 punctures) and singular locus the (quite complicated) knotted 4-valent graph shown here with respect to three different viewpoints. The graph has 24 vertices that should be removed and indicate 24 cusps, each based on the flat orbifold $(S^2,2,2,2,2)$. For the sake of clarity, we omit the vertex at infinity and four edges that are connected to it. The graph is contained in the 1-skeleton of the 24-cell. It has many symmetries that are unfortunately not apparent from the figure.}
 \label{orbifold:fig}
\end{figure}

\begin{rem}[Fibers are pleated]
We should mention that, like in the more familiar fibrations in dimension 3, the fibers $X^{\rm reg}$ and $X^{\rm sing}$ are embedded in $X$ in a very pleated way: they are decomposed respectively into octahedra and tetrahedra that form some angles along their 2-dimensional faces, in such a complicated way that any connected component $\bar X$ of the preimage of $X^{\rm reg}$ or $X^{\rm sing}$ in $\matH^4$ has the whole sphere at infinity $S^3 = \partial \matH^4$ as a limit set. This is a general fact when we analyse perfect circle-valued Morse functions on hyperbolic 4-manifolds.

Remember that the fibers $X^{\rm reg}$ and $X^{\rm sing}$ are not $\pi_1$-injective in $X$, because the index-2 critical points annihilate some of the elements in their fundamental groups. This implies that the connected component $\bar X$ just mentioned is not simply connected.
\end{rem}

Finally, for this very symmetric state $s$ we could prove rigorously the following fact, see \cite{Bat} for details.

\begin{teo} \label{rigid:teo}
The holonomy representation of the abelian cover $\widetilde X$ is infinitesimally, and hence locally, rigid.
\end{teo}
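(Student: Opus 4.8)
The plan is to reduce the statement, in three steps, first to the vanishing of a finite-dimensional twisted cohomology group, then to the vanishing of an explicit finitely presented $\matR[t,t^{-1}]$-module, and finally to a rigorous (exact, not floating-point) computer calculation; the details are the heart of \cite{Bat}. Write $\rho\colon\pi_1\widetilde X\to\Isom(\matH^4)$ for the holonomy and let $\mathfrak{g}=\mathfrak{so}(4,1)$ carry the action $\mathrm{Ad}\circ\rho$. Since $\pi_1\widetilde X$ is finitely generated (Introduction), $H^1(\pi_1\widetilde X;\mathfrak g)$ is finite-dimensional, infinitesimal rigidity of $\rho$ means exactly that it vanishes, and local rigidity then follows by the usual argument that $Z^1=B^1$ forces the conjugacy orbit of $\rho$ to be open in the representation variety. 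As $\widetilde X$ is aspherical, $H^1(\pi_1\widetilde X;\mathfrak g)=H^1(\widetilde X;\mathfrak g)$. First I would record that $H^0(\widetilde X;\mathfrak g)=\mathfrak g^{\pi_1\widetilde X}=0$: the group $\pi_1\widetilde X$ is an infinite normal subgroup of the lattice $\pi_1X<\Isom(\matH^4)$, hence Zariski dense, and therefore fixes no nonzero vector of the adjoint representation.

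Next I would set up the finite computation in the cube-complex framework of Section \ref{cube:subsection}. Let $\widehat{C}^*\to C^*=M^*$ be the infinite cyclic cover of the (compact) extended dual cube complex of $X$ classified by $f_*\colon\pi_1C^*\to\matZ$, so that $\widetilde X=\mathrm{int}(\widehat{C}^*)$ is homotopy equivalent to $\widehat{C}^*$. The cellular chain (or cochain) complex of $\widehat{C}^*$ with coefficients $\mathfrak g$ is obtained from the finite complex of $C^*$ by extending scalars to $\matR[t,t^{-1}]$ and twisting each boundary operator by a monomial $t^{\pm1}$ recording the change of $f$-level along the corresponding edge — that is, precisely by the sign data of the state $s$ used to build the diagonal map in Section \ref{diagonal:subsection}. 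This presents the relevant (twisted Alexander) homology of $\widehat{C}^*$ as that of an explicit finite complex of finitely generated free $\matR[t,t^{-1}]$-modules; a short Lefschetz-duality argument on the open $4$-manifold $\widetilde X$, keeping track of the boundary $\partial\widehat{C}^*$ (a disjoint union of unrolled rank-two cusps, homotopy equivalent to tori and computed by the same finite data), identifies the resulting module with $H^1(\widetilde X;\mathfrak g)$ rather than a compactly supported variant. Together with $H^0=0$ and finite generation of $\pi_1\widetilde X$, this module is $\matR[t,t^{-1}]$-torsion, hence determined by its elementary divisors — a twisted Alexander polynomial of the pair $(X,s)$ — and $H^1(\widetilde X;\mathfrak g)=0$ if and only if those divisors are units. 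Because the ideal $24$-cell and its $3$-colouring are defined over $\matQ$, the holonomy $\rho$ is an explicit integral matrix representation, so the entire computation takes place over $\matQ[t,t^{-1}]$ and is amenable to exact arithmetic.

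The remaining step, and the one I expect to be the real obstacle, is actually running this exact computation. Done directly, the matrices (of size roughly $\dim\mathfrak g$ times the number of cubes of $C^*$ in each dimension, over $\matQ[t,t^{-1}]$) are far too large for a rigorous symbolic Gaussian elimination; the point of singling out the very symmetric state $s$ of Figure \ref{colori:fig} is that the $24$-cell symmetries preserving $s$ act on the finite complex and block-diagonalise it, cutting the problem down to a size where the elementary divisors can be computed exactly and verified to be units. The main difficulties are therefore the correct implementation of this symmetry reduction and the discipline of keeping every arithmetic step exact, so that the output is a proof and not merely the strong numerical evidence available in the other $62$ cases of Tables \ref{fibers:table} and \ref{fibers2:table}; the conceptual reductions above are routine. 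Full details, together with the general strategy for abelian covers of manifolds built from right-angled polytopes, colours, and states, will appear in \cite{Bat}.
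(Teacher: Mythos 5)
The paper does not actually prove this theorem: the text immediately preceding the statement says ``we could prove rigorously the following fact, see \cite{Bat} for details,'' so there is no in-paper argument to compare your proposal against. Your outline of the reduction---infinitesimal rigidity means $H^1(\pi_1\widetilde X;\mathfrak g)=0$ for $\mathfrak g=\mathfrak{so}(4,1)$ with the adjoint action, $H^0=0$ since the infinite normal subgroup $\pi_1\widetilde X\triangleleft\pi_1 X$ is Zariski dense by Borel density, local rigidity follows from Weil's argument for a finitely generated group, and the computation is organised on the finite cube complex $C^*$ with symmetry reduction---is a plausible reconstruction of what must happen in \cite{Bat}, and it is consistent with the paper's brief hints. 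So the review below is of the sketch on its own terms, not against a reference proof.

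There is one step that, as written, would not go through. Your parenthetical ``(or cochain)'' is the problem: the cellular \emph{chain} complex of $\widehat{C}^*$ with $\mathfrak g$ coefficients is indeed the finite chain complex of $C^*$ with scalars extended to $\Lambda=\matR[t,t^{-1}]$ and the differential twisted by $t^{\pm1}$, but the \emph{cochain} complex of $\widehat C^*$ is the $\matR$-dual of an infinitely generated complex, i.e.\ a complex of infinite products $\mathrm{Maps}(\matZ,\mathfrak g)^{n_k}$, not of free $\Lambda$-modules; it is not obtained by $\Lambda$-extension from the finite data, and the finitely generated $\Lambda$-module the finite complex produces is $H_1(\widehat C^*;\mathfrak g)$, not $H^1$. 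Your appeal to Lefschetz duality to bridge this gap is vague and, in the form stated, would trade $H^1(\widetilde X;\mathfrak g)$ for a Borel--Moore or compactly supported group rather than the desired one. There is a cleaner and correct bridge that you should use instead: the Killing form on $\mathfrak{so}(4,1)$ is nondegenerate and $\mathrm{Ad}$-invariant, so $\mathfrak g$ is self-dual as a $\pi_1$-module, and the universal coefficient theorem over the field $\matR$ then gives $H^1(\widetilde X;\mathfrak g)\cong\mathrm{Hom}_\matR\bigl(H_1(\widetilde X;\mathfrak g),\matR\bigr)$. Thus $H^1=0$ if and only if $H_1(\widehat C^*;\mathfrak g)=0$, and the latter is exactly the degree-one homology of the finite free $\Lambda$-complex you describe---a finitely generated module, automatically $\Lambda$-torsion by your finite-generation argument, whose vanishing can be tested by an exact Smith normal form computation over a number field (note that the holonomy here involves $\sqrt2$, so ``over $\matQ$'' should be ``over a small number field''). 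With this substitution your outline is coherent; the substantive content of the theorem then lies entirely in the symmetry-reduced exact computation, which neither you nor the paper reproduce and which is deferred to \cite{Bat}.
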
 

\subsection{A very small orbifold example} \label{Y:subsection}
Let $\widetilde X$ be the abelian cover of $X$ constructed from the very symmetric state of Section \ref{very:symmetric:subsubsection}. The manifolds $X$ and $\widetilde X$ have plenty of symmetries, many of which preserve the fibers of $f$. It is therefore tempting to try to quotient them by some symmetries, in order to find smaller examples. 

By quotienting $\widetilde X$ with an appropriate group of symmetries we have found a very small hyperbolic 4-orbifold $Y$ with a circle-valued Morse function $f\colon Y \to S^1$ whose singular and regular fibers appear in the very first segment of the SnapPea census \cite{CHW} as the manifolds ${\tt m036}$ and ${\tt m203}$. 

The construction of $Y$ may be of independent interest, so we briefly describe it. Let $\calC$ be the ideal regular 24-cell, with center $v$. Let $P$ be the cone on $v$ over a octahedral facet $O$ of $\calC$. Then $P$ is a 4-dimensional pyramid, with a octahedral base and eight tetrahedral lateral facets. It has 8 ideal vertices and one real one. Its dihedral angles are $2\pi/3$ at the triangles containing $v$ (the lateral ones), and $\pi/4$ at the triangles contained in $O$ (the base ones). It is not a Coxeter polytope since $2\pi/3$ does not divide $\pi$, but it may be used nevertheless to construct manifolds.

\begin{figure}
 \begin{center}
  \labellist
\small\hair 2pt
\pinlabel $s$ at 160 20
\endlabellist
  \includegraphics[width = 10 cm]{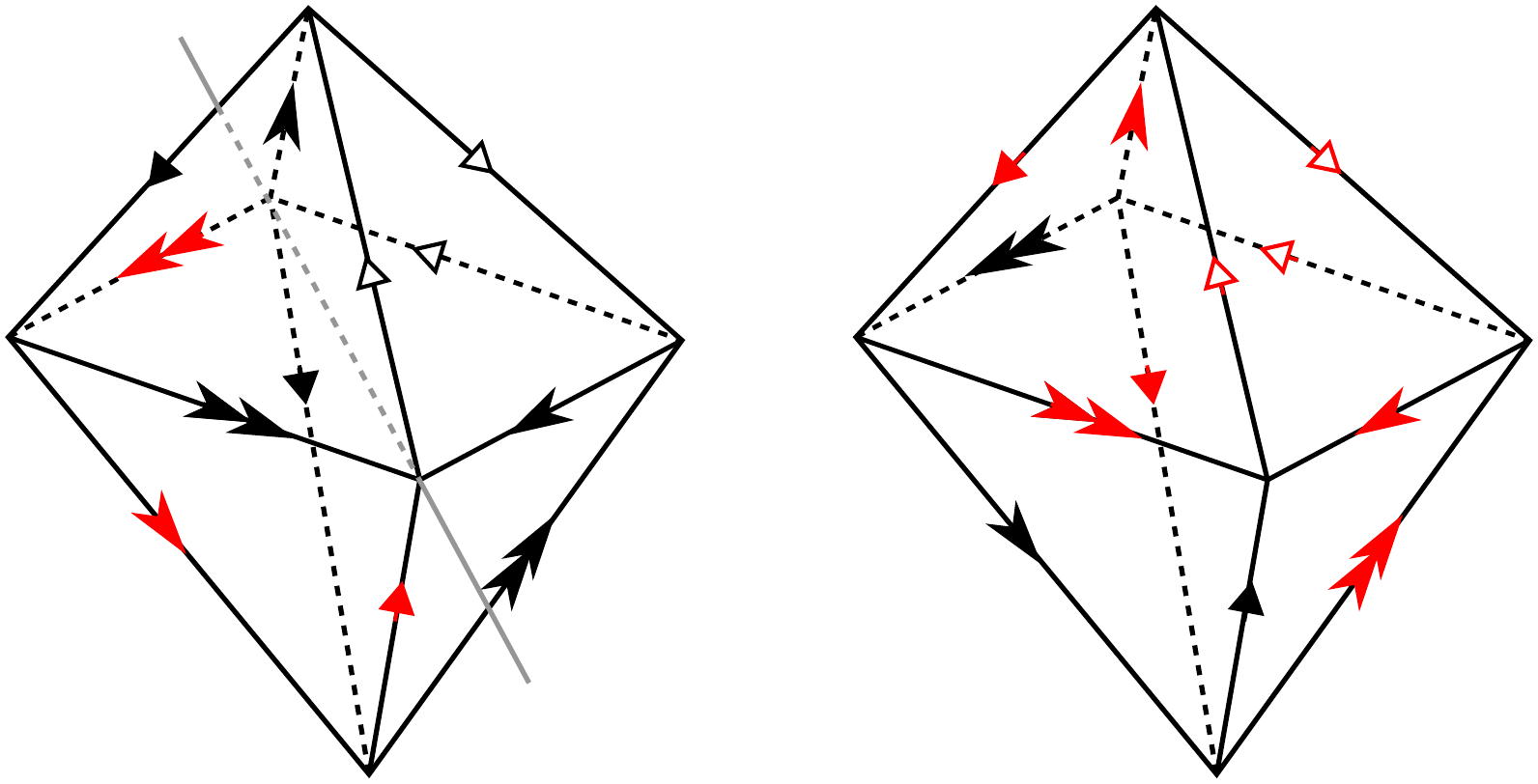}
 \end{center}
 \nota{There is only one way to pair the faces of the two octahedra so that all the symbols at the edges match. The result is the lens space $L(12,5)$. }
 \label{new_lens:fig}
\end{figure}

Figure \ref{new_lens:fig} shows two octahedra $O_1$ and $O_2$ and a face pairing between them giving the lens space $L(12,5)$. Note that every edge has valence 3: if we give each $O_i$ the structure of a spherical regular octahedron with dihedral angles $2\pi/3$, we get $L(12,5)$ with its spherical metric. 

We construct $Y$ by picking two copies $P_1$ and $P_2$ of $P$, and identifying their lateral facets by extending the pairing shown in Figure \ref{new_lens:fig} for their base octahedra $O_1$ and $O_2$. Then we glue $O_1$ to $O_2$ along the following isometry: first rotate $O_1$ of an angle $\pi$ along the axis $s$ shown in Figure \ref{new_lens:fig}, and then translate it to $O_2$. To verify that we get an orbifold $Y$ with $v$ as the only singular point, we just check that this identification produces two orbits of eight triangles, and since $8 \cdot \pi/4 = 2\pi$ we are done.

The orbifold $Y$ has a single singular point with link $L(12,5)$, a single cusp, and $\chi(Y) = 1/12$. It can in fact be obtained by quotienting $\widetilde X$ by an appropriate group of isometries, and it inherits from it a circle-valued ``Morse function'' $f\colon Y \to S^1$, that has a single ``index-2 singular point'' at $v$. (A Morse function $f$ on a 4-manifold near an index-2 critical point is equivalent to $f(x_1,x_2,x_3,x_4) = x_1^2+x_2^2 -x_3^2-x_4^2$. Such a function $f$ is $\SO(2) \times \SO(2)$-invariant and hence descends to a function on the cone over any lens space.)

\begin{figure}
 \begin{center}
  \includegraphics[width = 3 cm]{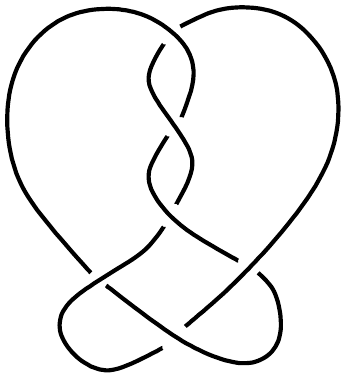}
 \end{center}
 \nota{The link {\tt L6a2} from Thistelthwaite's link table. Its complement is the hyperbolic manifold ${\tt m 203}$.}
 \label{L6a2:intro:fig}
\end{figure}

\begin{figure}
 \begin{center}
  \includegraphics[width = 4.5 cm]{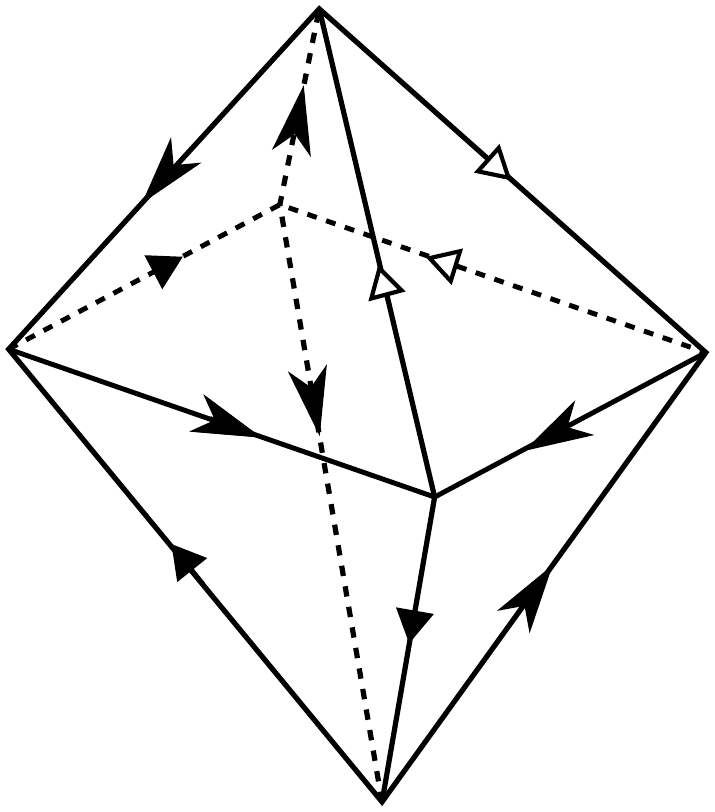}
 \end{center}
 \nota{The hyperbolic manifold ${\tt m036}$ is obtained by pairing the faces of the octahedron with the unique pair of isometries that match the arrows on the edges. The manifold so obtained has three edges with valence 3, 3, and 6.}
 \label{m036:fig}
\end{figure}

The regular fiber of $f$ is the single-cusped manifold $Y^{\rm reg} = {\tt m036}$ from the census \cite{CHW}, with volume $\sim$ 3.177293 and homology $\matZ \times \matZ_3$. It decomposes as a single (non regular) ideal octahedron \cite{HPP} as in Figure \ref{m036:fig}. The singular fiber is the twice-cusped $Y^{\rm sing} ={\tt m203}$, that is the complement of the link in Figure \ref{L6a2:intro:fig}, with one cusp coned; it has volume $\sim$ 4.059766 and decomposes into four regular ideal tetrahedra \cite{CHW}. The manifolds $Y^{\rm reg}$ and $Y^{\rm sing}$ are covered by $X^{\rm reg}$ and $X^{\rm sing}$.

Using SnapPy we discover that if we Dehn fill one cusp of {\tt m203} by filling the slope $\pm \frac 32$ we indeed get {\tt m036}. It is not necessary to specify which component of the link is filled since they are symmetric. SnapPy also says that there is an isometry $\psi$ of ${\tt m203}$ that sends the slope $\frac rs$ to $-\frac rs$, and this explains why we get the same manifold ${\tt m036}$ by filling the slope $\pm \frac 32$. The intersection between the two slopes is $\det \matr 3{-3}22 = 12$, coherently with the fact that $v$ is a cone over $L(12,5)$. As with the manifold $W$ considered in Section \ref{W:subsection}, using SnapPy we also discover that ${\tt m036}$ has an \emph{additional isometry} $\varphi$ that is not induced from ${\tt m203}$. A ping-pong dynamics between $\varphi$ and $\psi$ as in Remark \ref{ping:pong:rem} is very likely to hold also here.

\subsubsection{A geometrically infinite Kleinian group in $\matH^4$ with 2 generators}
The detour from $X$ to $Y$ also led us to find an explicit holonomy $\Phi$ for the regular fiber $Y^{\rm reg}$ of $Y$. The holonomy is interesting because its image in $\Iso(\matH^4)$ is discrete and has limit set $S^3$ and the group has only two generators. However, recall that the holonomy is not injective because $Y^{\rm reg}$ is not $\pi_1$-injective in $Y$.

The holonomy is the following. A presentation for $\pi_1({\tt m036})$ is
$$\langle\ a, b\ |\ a^{-2}b^{-2}ab^{-2}a^{-2}b \ \rangle.$$
We have  
$$\Phi(a) = 
\begin{pmatrix}
1/2 & -1/2 & 1/2 & 1/2  & 0 \\
1/2 & 1/2 & -1/2 & 1/2 & 0 \\
1/2 & -1/2 & -1/2 & -1/2 & 0 \\
1/2 & 1/2 & 1/2 & -1/2 & 0 \\
0 & 0 & 0 & 0 & 1
\end{pmatrix}$$
and
$$
\Phi(b) =
\begin{pmatrix}
-7/2 & 3/2 & 3/2 & 3/2 & 3\sqrt 2 \\
 -3/2   &     1/2  &      1/2  &     -1/2   & \sqrt 2 \\
       3/2   &     1/2   &    -1/2  &     -1/2 &  -\sqrt 2 \\
       3/2    &   -1/2   &     1/2   &    -1/2  & -\sqrt 2 \\
-3\sqrt 2  &  \sqrt 2 &   \sqrt 2  &  \sqrt 2  &        5 
\end{pmatrix}.
$$
Both $\Phi(a)$ and $\Phi(b)$ are elliptic of order 12, while $a$ and $b$ have infinite order in $\pi_1({\tt m036})$. The representation $\Phi$ is \emph{type-preserving}, that is it sends parabolics to parabolics. The geometrically infinite discrete group $\Gamma = \Phi(\pi_1({\tt m036})) $ has only two generators and these are expressed explicitly above as some elliptic transformations. The limit set of $\Gamma$ is the whole sphere at infinity $S^3 = \partial \matH^4$. The group $\Gamma$ has infinitely many conjugacy classes of finite subgroups: the existence of such phenomena in dimension 4 is not a surprise \cite{Kf}. 

We can also descend one dimension further: we discover from \cite{Bell} that ${\tt m036}$ fibers over $S^1$ with fiber the surface $\Sigma$ with genus 2 and one puncture. Therefore there is a type-preserving representation $\Phi\colon \pi_1(\Sigma) \to \Isom(\matH^4)$ whose image is a discrete geometrically infinite group with limit set $S^3 = \partial \matH^4$.

\begin{quest} Can we deform type-preservingly the representations $\Phi$ of $\pi_1({\tt m036})$ and $\pi_1(\Sigma)$? Can we find a path connecting them to their Fuchsian representations, or to any geometrically finite discrete representation? 
\end{quest}

\subsection{A compact example} \label{Z:subsection}
We finish by describing the compact example $Z$. Let $P$ be the compact right-angled 120-cell. It has a natural 5-colouring that may be described using quaternions as follows. The facets of $P$ are naturally identified with 120 elements of the binary icosahedral group $I^*_{120}$. This contains the binary tetrahedral group $T^*_{24}$ as an index-5 (not normal) subgroup. The five left lateral classes of $T^*_{24}$ in $I^*_{120}$ furnish a 5-colouring for $P$.

Recall that $\chi(P) = 17/2$. The 5-colouring produces a compact hyperbolic 4-manifold $Z$ with $\chi(Z) = 272$. To build the state $s$, we try to mimic the algebraic construction that worked very well with the 24-cell: we pick the state for $T_{24}^*$ described in Section \ref{very:symmetric:subsubsection} and we extend it to the lateral classes by left-multiplying with some given elements. Using Sage we find that one $s$ constructed in this way fulfills the requirements of Theorem \ref{ad:teo} and hence produces a perfect circle-valued Morse function $f\colon Z \to S^1$ with as much as 272 singular points of index 2.

An algebraic fibration on a manifold tessellated by copies of the 120-cell was already constructed in \cite{JNW}.

\begin{rem}[Large Euler characteristic]
With $P_4$ and $\calC$ the ascending and descending links collapsed either to a point or to a circle. This allowed us to conclude that each copy of $P_4$ and $\calC$ in the decomposition contains either 0 or 1 singular points. This was possible since $\chi(P_4) = 1/16$ and $\chi(\calC)=1$, so the average number of singular points in each polytope of the decomposition is $1/16$ and $1$ in these cases. 

Here we have $\chi(P) = 17/2$, and this is the average number of singular points in each copy of $P$. It is therefore impossible to find only ascending and descending links that collapse to points or circles: some of them must collapse to more complicated 1-complexes. As a consequence, the Morse function $f_s$ is not canonically determined by the combinatorics: we are coning along some Heegaard graphs in $S^3$ and there are many ways to perturb this to a disjoint simultaneous attachment of 2-handles. For this reason we did not make any attempt to determine the singular fibers.
\end{rem}

\section{Related results and open questions} \label{questions:section}
The main inspiration of this work is the paper \cite{JNW} where the authors construct some algebraic fibrations on some hyperbolic 4-manifolds that cover the right-angled 24- and 120-cells. Our first contribution here is to promote these algebraic fibrations to perfect circle-valued Morse functions.
Other related recent contributions to finding algebraic fibrations are \cite{AS, FV, Kie}.

Like fibrations, perfect circle-valued Morse functions lift to any finite-sheeted cover. It therefore makes sense to ask the following rather far-reaching question.

\begin{quest}
Does every finite-volume hyperbolic $n$-manifold have a finite cover that has a perfect circle-valued Morse function?
\end{quest}

For the moment not a single example of perfect circle-valued Morse function is known in dimension $n\geq 5$. Given the lifting property of such functions, the question can be rephrased as follows: does every commensurability class contain some representative that has a perfect circle-valued Morse function?

Concerning dimension $n=4$, the examples exhibited in this paper show that the answer is positive for the commensurability classes of $P_4$, the right-angled 24-cell, and the right-angled 120-cell. The first two commensurability classes are actually the same \cite{RT4}. Every Coxeter simplex in dimension four belongs to one of these two classes \cite{JKRT}, see \cite{M} for a survey on hyperbolic 4-manifolds. So in particular the Davis manifold \cite{Da} and the Conder -- Maclachlan manifold \cite{CM} also belong to the second commensurability class, and in particular they virtually have a perfect circle-valued Morse function.

It is easy to construct a hyperbolic 4-manifold that does \emph{not} admit a perfect circle-valued Morse function: it suffices to pick a cusped one that has at least one cusp section homeomorphic to the Hantzsche-Wendt manifold. Since this flat 3-manifold does not fiber, the cusped 4-manifold has no circle-valued Morse function of any kind. All the 22 orientable manifolds in \cite{RT4} contain such a section. There is also a cusped hyperbolic 4-manifold whose cusp sections are all of this kind \cite{FKS}.

Another obvious obstruction to having a perfect circle-valued Morse function is the vanishing of the first Betti number over the real numbers. Some of the cusped manifolds in \cite{RT4} have $b_1=0$, but no compact hyperbolic 4-manifold with $b_1=0$ seems known at present. In particular, we do not know a single example of a compact hyperbolic 4-manifold that is known not to have a perfect circle-valued Morse function.

We have seen in Theorem \ref{W:teo} that the open set $U$ is \emph{polytopal}, that is it is the cone over the open facets of a polytope.

\begin{quest} Let $M$ be a hyperbolic $n$-manifold with $n\geq 3$. Is there always an open polytopal subset $U\subset H^1(M;\matR)$ such that the classes represented by circle-valued Morse functions form the intersection $U\cap H^1(M;\matZ)$?
\end{quest}

This could be related to the fact that the BNS invariant is of polytopal type for many groups, see \cite{K}. The cohomology classes represented by perfect circle-valued Morse functions are contained in the BNS invariant of $\pi_1(M)$ since they have finitely generated kernel.

\begin{quest} Let $M^{\rm reg}$ be a regular fiber of a perfect circle-valued Morse function on a finite-volume hyperbolic 4-manifold $M$. Is $M^{\rm reg}$ necessarily hyperbolic? Which hyperbolic 3-manifolds can arise in this way?
\end{quest}

If $M^{\rm reg}$ were not aspherical, we would get a counterexample to Whitehead's asphericity conjecture, since the abelian cover of $M$ is aspherical and obtained from $M^{\rm reg}$ by attaching infinitely many 2-handles.

\end{document}